\documentclass[11pt, a4paper, oneside]{amsart}

\usepackage[english]{babel}
\usepackage{amsmath, amsthm, amsfonts, mathrsfs, amssymb}
\usepackage{mathtools}
\mathtoolsset{centercolon}
\usepackage{booktabs}
\usepackage[shortlabels]{enumitem}
\setlist[itemize]{leftmargin=20pt}
\setlist[enumerate]{leftmargin=20pt}
\usepackage[colorlinks, citecolor = blue]{hyperref}
\usepackage[hmargin=3cm,vmargin=3cm]{geometry}

\usepackage{color}
\usepackage{graphicx}
\usepackage{tikz-cd}
\usetikzlibrary{arrows}


\newcommand{\Q}{\ensuremath{\mathbf{Q}}}
\newcommand{\R}{\ensuremath{\mathbf{R}}}
\newcommand{\C}{\ensuremath{\mathbf{C}}}


\newcommand{\mc}{\mathcal}

\DeclarePairedDelimiter\abs{\lvert}{\rvert}

\DeclarePairedDelimiter\cbrace\{\}
\DeclarePairedDelimiter\ha()

\DeclarePairedDelimiter{\nrm}\lVert\rVert

\DeclarePairedDelimiter{\nnrm}{\lvert\kern-0.3ex\lvert\kern-0.3ex\lvert}{\rvert\kern-0.3ex\rvert\kern-0.3ex\rvert}

\newcommand{\nrmb}[1]{\bigl\|#1\bigr\|}
\newcommand{\absb}[1]{\bigl|#1\bigr|}
\newcommand{\hab}[1]{\bigl(#1\bigr)}
\newcommand{\cbraceb}[1]{\bigl\{#1\bigr\}}

\newcommand{\nnrmb}[1]{\bigl\vert\kern-0.3ex\bigl\vert\kern-0.3ex\bigl\vert#1\bigr\vert\kern-0.3ex\bigr\vert\kern-0.3ex\bigr\vert}


\newcommand{\has}[1]{\Bigl(#1\Bigr)}
\newcommand{\cbraces}[1]{\Bigl\{#1\Bigr\}}

\newcommand{\nnrms}[1]{\Bigl\vert\kern-0.3ex\Bigl\vert\kern-0.3ex\Bigl\vert#1\Bigr\vert\kern-0.3ex\Bigr\vert\kern-0.3ex\Bigr\vert}


\DeclareMathOperator{\ind}{\mathbf{1}}

\newcommand{\vvvert}[1]{{\left\vert\kern-0.3ex\left\vert\kern-0.3ex\left\vert #1 
    \right\vert\kern-0.3ex\right\vert\kern-0.3ex\right\vert}}
\newcommand{\dd}{\hspace{2pt}\mathrm{d}}
\newcommand{\ddn}{\mathrm{d}}
\newcommand{\ee}{\mathrm{e}}

\def\avint_#1{\mathchoice{\mathop{\kern 0.2em\vrule width 0.6em height 0.69678ex depth -0.58065ex \kern -0.8em \intop}\nolimits_{\kern -0.4em#1}}{\mathop{\kern 0.1em\vrule width 0.5em height 0.69678ex depth -0.60387ex \kern -0.6em \intop}\nolimits_{#1}} {\mathop{\kern 0.1em\vrule width 0.5em height 0.69678ex depth -0.60387ex \kern -0.6em \intop}\nolimits_{#1}} {\mathop{\kern 0.1em\vrule width 0.5em height 0.69678ex depth -0.60387ex \kern -0.6em \intop}\nolimits_{#1}}}

\newtheorem{theorem}{Theorem}
\newtheorem{corollary}[theorem]{Corollary}
\newtheorem{lemma}[theorem]{Lemma}
\newtheorem{proposition}[theorem]{Proposition}

{}

\theoremstyle{remark}
\newtheorem{remark}[theorem]{Remark}
\newtheorem{example}[theorem]{Example}

\theoremstyle{definition}
\newtheorem{definition}[theorem]{Definition}

\numberwithin{theorem}{section}
\numberwithin{equation}{section}
\title[Banach function spaces done right]{Banach function spaces done right}
\thanks{Z. N. is supported by the grant Juan de la Cierva formación 2021 FJC2021-046837-I, the Basque Government through the BERC 2022-2025 program, by the Spanish State Research Agency project PID2020-113156GB-I00/AEI/10.13039/501100011033 and through BCAM Severo Ochoa excellence accreditation SEV-2023-2026.}

\author{Emiel Lorist}
\address[E. Lorist]{Delft Institute of Applied Mathematics \\ Delft University of Technology \\ P.O. Box 5031\\ 2600 GA Delft \\The Netherlands}
\email{e.lorist@tudelft.nl}

\author{Zoe Nieraeth}
\address[Z. Nieraeth]{BCAM\textendash  Basque Center for Applied Mathematics, Bilbao, Spain}
\email{zoe.nieraeth@gmail.com}
\allowdisplaybreaks

\begin{document}
\begin{abstract}
In this survey, we discuss the definition of a (quasi-)Banach function space. We advertise the original definition by Zaanen and Luxemburg, which does not have various issues introduced by other, subsequent definitions. Moreover, we prove versions of well-known basic properties of Banach function spaces in the setting of \emph{quasi}-Banach function spaces.
\end{abstract}

\keywords{Banach function space}

\subjclass[2020]{46E30}


\maketitle

\section{Introduction}
Banach spaces of measurable functions, like (weighted) Lebesgue spaces, Orlicz spaces, Lorentz spaces, Morrey spaces and tent spaces play a central role in many areas of mathematical analysis. These spaces all fall within the broader class of \emph{Banach function spaces}, which have the property that the pointwise order between functions is in some sense compatible with the norm.

Various definitions of Banach function spaces exist in the literature. A popular choice  can be phrased as follows: A Banach function space $X$ is a subspace of $L^0(\Omega)$,
the space of measurable functions $f \colon \Omega \to \C$ for a $\sigma$-finite measure space $(\Omega,\mu)$, equipped with a norm $\nrm{\,\cdot\,}_X$ such that it satisfies the following properties:
\begin{itemize}
  \item \textit{Ideal property:} If $f\in X$ and $g\in L^0(\Omega)$ with $|g|\leq|f|$ a.e., then $g\in X$ with $\nrm{g}_X\leq \nrm{f}_X$;
\item \textit{Fatou property:} If $0\leq f_n \uparrow f$ for $(f_n)_{n\geq 1}$ in $X$ and $\sup_{n\geq 1}\nrm{f_n}_X<\infty$, then $f \in X$ and $\nrm{f}_X=\sup_{n\geq 1}\nrm{f_n}_X$;
\end{itemize}
the latter of which implies that $X$ is complete. Moreover, to ensure that $X$ contains a sufficient number of functions, it is assumed that, for any measurable set $E \subseteq \Omega$ of finite measure, one has
\begin{align}\label{eq:illegal}
\ind_{E} &\in X,\\
\label{eq:illegaldual}
\int_E \abs{f}\dd \mu<\infty &\text{ for all }  f\in X,
\end{align}
the latter being equivalent to the existence of a $C_E>0$ such that $\int_E \abs{f} \dd \mu \leq C_E \nrm{f}_X$.
The definition of a Banach function space with these properties is often attributed to the book by Bennet and Sharpley \cite{BS88}. Some variants already appeared earlier in the book by Lindenstrauss and Tzafriri \cite{LT79}.

The ideal property is the most fundamental property of a Banach function space, making sure that the natural partial order on $L^0(\Omega)$ is compatible with the norm on $X$. 
The Fatou property can be omitted in the definition of a Banach function space. In this case, one has to ensure the completeness of $X$ separately, either by assuming it explicitly or through a notion called the \emph{Riesz--Fischer property}. This is  the approach which we take in this survey, see Subsection \ref{subs:completeness}. 

Originally, the Fatou property was introduced as part of the definition in the PhD thesis of Luxemburg \cite{Lu55}, but it was  later removed in the series of papers by Luxemburg and Zaanen \cite{LZ63-1} and the subsequent book by Zaanen \cite{Za67}. Unfortunately, it was then reintroduced  in \cite{BS88}. To give an example where this is problematic, proper closed subspaces of Banach function spaces such as, e.g., $c_0\subsetneq\ell^\infty$ do not satisfy the Fatou property (see Proposition~\ref{prop:noproperclosedsubspace}). Nonetheless, for example in applications in harmonic analysis, this situation is somewhat pathological. Indeed, any space of functions with the ideal property, but without the Fatou property, can be continuously embedded in a space that \emph{does} have the Fatou property (see Proposition~\ref{prop:fatouembedding}). Moreover, the Fatou property ensures that the integral pairing with functions in the K\"othe dual $X'$,
i.e. the space
$$
X':=\{g\in L^0(\Omega):fg\in L^1(\Omega)\text{ for all $f\in X$}\}.
$$
with
\[
\|g\|_{X'}:=\sup_{\substack{\|f\|_X=1}}\|fg\|_{L^1(\Omega)},
\]
recovers the norm of $X$. Since this allows for the use of  duality arguments typical in many areas of mathematical analysis, the Fatou property is therefore highly desirable.

The main problem with the above definition of a Banach function space from \cite{BS88}, when working in areas such as harmonic analysis, are properties
\eqref{eq:illegal} and \eqref{eq:illegaldual}. For example, the weighted Lebesgue space $L^p(\R^d,w)$ for a Muckenhoupt weight $w \in A_p$ may not be a Banach function space over $\R^d$ with the Lebesgue measure in the sense of the definition stated above, see \cite[Section 7.1]{SHYY17}. To circumvent this issue, in the work \cite{CMM22} the authors included these spaces by considering \eqref{eq:illegal} and \eqref{eq:illegaldual} with respect to the measure $w\,\mathrm{d}x$ rather than with respect to the Lebesgue measure. This, however, is inadequate, as it still does not include many important spaces, such as Musielak--Orlicz spaces and  Morrey spaces, even in the unweighted setting.
\begin{itemize}
\item  A Musielak--Orlicz space $L^\varphi(\Omega)$, with $\varphi\colon \Omega \times \R_+\to \R_+$ such that $\int_E\varphi(s,t) \dd\mu(s)=\infty$ for some measurable $E \subseteq \Omega$ of finite measure and all $t>0$, does not satisfy \eqref{eq:illegal} (see, e.g., \cite[Chapter II]{Mu83}).
\item Certain Morrey spaces do not satisfy \eqref{eq:illegaldual}, as was shown in \cite[Example~3.3]{ST15b}.
\end{itemize}
In recent literature, this issue has lead the authors of \cite{MMMMM22} to develop certain theory for Morrey spaces in Chapter 7 and afterwards prove analogous results in Banach function spaces (with assumptions \eqref{eq:illegal} and \eqref{eq:illegaldual}) in Chapter 8. The results in Chapter 7 could have been regarded as a special case of the results in Chapter 8 if a definition of Banach function spaces that includes Morrey spaces would have been chosen.

A second issue with \eqref{eq:illegal} and \eqref{eq:illegaldual} arises when one wants to treat \emph{quasi}-Banach function spaces, i.e. replacing the norm on $X$ by a quasi-norm. In this setting, the condition \eqref{eq:illegaldual} is typically far too restrictive, as can already be seen when considering $L^p(\R^d)$ for $0<p<1$. However, omitting only \eqref{eq:illegaldual} leads to the asymmetric situation in which the K\"othe dual $X'$
does not necessarily contain the required indicator functions (see \cite[Proposition 2.16]{ORS08} for an illustration of this phenomenon). Moreover, omitting both \eqref{eq:illegal} and \eqref{eq:illegaldual} instead  also leads to pathological situations, as $\nrm{\,\cdot\,}_{X'}$ may only be a semi-norm in this case, see Subsection~\ref{subsec:saturation}.

\bigskip

Recognizing these problems with the definition of a (quasi-)Banach function space including \eqref{eq:illegal} and \eqref{eq:illegaldual}, the authors of \cite{SHYY17} proposed a solution to these issues by introducing so-called \emph{ball} quasi-Banach function spaces, in which the arbitrary measurable sets $E$ in \eqref{eq:illegal} and \eqref{eq:illegaldual} are replaced by metric balls. This definition has since been adopted by various authors, see, e.g., \cite{DGPYYZ22, DLYYZ23, PYYZ23b, SYY22, TYYZ21, WYY23, We22}. However, morally speaking, the definition of a (quasi-)Banach function space should be a measure theoretic one, i.e. not referencing any metric structure of $\Omega$. This is, for example, of paramount importance when working on the intersection between harmonic analysis and probability theory, as the natural object to work with in that setting is a probability space without any metric structure. 

Furthermore, there is no need to define a new notion (like a ball quasi-Banach function space) in order to solve the issue with the assumptions in \eqref{eq:illegal} and \eqref{eq:illegaldual}. Indeed, the solution is readily available in the literature, dating all the way back to the works of Zaanen and Luxemburg \cite{Lu55, LZ63-1,Za67}. Indeed, one should replace \eqref{eq:illegal} and \eqref{eq:illegaldual} by the assumption that $X$ is \emph{saturated}:
\begin{itemize}
  \item \textit{Saturation property:} For every measurable $E\subseteq\Omega$ of positive measure, there exists a measurable $F\subseteq E$ of positive measure with $\ind_F\in X$.
\end{itemize}
Defining quasi-Banach function spaces using the ideal and saturation properties yields a purely measure-theoretic definition, which includes all aforementioned specific function spaces as examples:
\begin{itemize}
\item For weighted Lebesgue spaces and Morrey spaces, one can use $F =E \cap B$ for a large enough ball in $\R^d$.
\item For Musielak--Orlicz spaces, one can use $F=E \cap T_n$ for large enough $n$ and $T_n$ as in \cite[p.64]{Kam85}. 
\end{itemize}
Moreover, as we shall see, the K\"othe dual of such a space automatically satisfies the ideal, Fatou, and, if $X$ is a Banach function space, the saturation properties.

It should be noted that the saturation property has various equivalent formulations. It is, for example, equivalent to either of the following assumptions (see Proposition \ref{prop:weakorderunit}):
\begin{enumerate}[(i)]
    \item\label{it:sat1intro}  There exists a $u \in X$ with $u>0$ a.e.;
    \item\label{it:sat2intro}   There is an increasing sequence of sets $F_n\subseteq\Omega$ with $\ind_{F_n}\in X$ and $\bigcup_{n=1}^\infty F_n=\Omega$;
\end{enumerate}
The function $u$ in assumption \ref{it:sat1intro} is called a \emph{weak order unit}. Generally, its utility is in that the ideal property of $X$ implies that $u\ind_E\in X$ for \emph{all} measurable sets $E\subseteq\Omega$. Thus, arguments that require \eqref{eq:illegal} can still be done by simply multiplying each function in the space by $u^{-1}$. We detail this procedure in Section~\ref{subsec:weights}. As a matter of fact, there is a weight $0<w\in L^1(\Omega)$ so that, with respect to the measure $w\,\mathrm{d}\mu$, 
 the condition \eqref{eq:illegaldual} is also satisfied by this space (see Proposition~\ref{prop:weightkothe}).

 The assumption in \ref{it:sat2intro} is actually the assumption used in \cite{Lu55}. Notably, almost $70$ years later, the authors of the recent book \cite{MMM23b} seem to have independently rediscovered the exact formulation of the assumption \ref{it:sat2intro}, calling the resulting class of spaces \emph{generalized Banach function spaces}. However, it would historically be more accurate to refer to this class simply as Banach function spaces, whereas the class of spaces with properties \eqref{eq:illegal} and \eqref{eq:illegaldual} should be called \emph{restricted Banach function spaces}.

\bigskip

The goal of this survey is two-fold. 
\begin{itemize}
  \item First of all, we would like to advertise the definition of a (quasi-)Banach function space using the saturation property instead of \eqref{eq:illegal} and \eqref{eq:illegaldual} and the Fatou  property as optional assumption.
  \item Secondly, we will provide versions of well-known basic properties of Banach function spaces in the setting of \emph{quasi}-Banach function spaces.
\end{itemize}
 Our claim to originality in this survey is rather humble. Most of our discussion for Banach function spaces can, for example, also be found in \cite[Chapter 15]{Za67}. However, we are not aware of a comprehensive reference work for the \emph{quasi}-Banach function space case (see, e.g., \cite{KM07, KMP03, Vo15, NP20} for some partial results), and hope that this survey  may serve as a solid introduction for anyone working with quasi-Banach function spaces. In particular,  multilinear harmonic analysis has in recent years become a very active research area, in which the quasi-Banach range naturally makes its appearance.

\section{Quasi-Banach function spaces}\label{sec:qBFS}
In this section we will introduce quasi-Banach function spaces and discuss their defining properties in detail.
Let $(\Omega,\mu)$ be a  measure space, which will always be assumed to be $\sigma$-finite. Let $L^0(\Omega)$ denote the space of measurable functions on $(\Omega,\mu)$. Let $X \subseteq L^0(\Omega)$ be a complete, quasi-normed vector space. Denote the quasi-norm by $\nrm{\,\cdot\,}_X$ and the optimal constant $K\geq 1$ such that
\[
\nrm{f+g}_X\leq K(\nrm{f}_X+\nrm{g}_X), \qquad f,g \in X,
\]
 by $K_X$. The space $X$ called a \emph{quasi-Banach function space over $(\Omega,\mu)$} if it satisfies the following properties:
\begin{itemize}
  \item \textit{Ideal property:} If $f\in X$ and $g\in L^0(\Omega)$ with $|g|\leq|f|$, then $g\in X$ with $\nrm{g}_X\leq \nrm{f}_X$.
  \item \textit{Saturation property:} For every measurable $E\subseteq\Omega$ of positive measure, there exists a measurable $F\subseteq E$ of positive measure with $\ind_F\in X$.
\end{itemize}
If $\nrm{\,\cdot\,}_X$ is a norm, i.e., if $K_X=1$, then $X$ is called a \emph{Banach function space over $(\Omega,\mu)$.} 
Since the ideal property is inherently tied to the choice of quasi-norm $\|\cdot\|_X$ on the space $X$, we sometimes emphasize this by writing $(X,\|\cdot\|_X)$ rather than $X$.

\begin{remark}\label{rem:def}~
\begin{enumerate}[(i)]
\item Instead of introducing a quasi-Banach function space as a complete quasi-normed space with the ideal and saturation properties, one can equivalently start by defining a \emph{function quasi-norm} $\rho\colon L^0(\Omega)_+ \to [0,\infty]$ satisfying corresponding versions of these properties and afterwards setting
$$
X:=\cbrace{f \in L^0(\Omega): \rho\ha{\abs{f}}<\infty}, \qquad \nrm{f}_X:= \rho(\abs{f}).
$$
The equivalence of these approaches can be seen 
by setting $$\rho(f) := \begin{cases}
    \nrm{f}_X, \qquad &f \in X,\\
    \infty, &f \notin X.
\end{cases}$$
\item\label{it:def2} As we will show in Section~\ref{subs:completeness}, a quasi-normed vector space $X\subseteq L^0(\Omega)$  is complete if and only if it has the Riesz--Fischer property:
\begin{itemize}
     \item \textit{Riesz--Fischer property:} If $(f_n)_{n\geq 1}$ in $X$ and $\sum_{n=1}^\infty K_X^n\|f_n\|_X<\infty$, then $\sum_{n=1}^\infty f_n\in X$ with $\big\|\sum_{n=1}^\infty f_n\big\|_X\leq K_X\sum_{n=1}^\infty K_X^n\|f_n\|_X$.
\end{itemize}
In many examples,  $X$ actually satisfies the stronger Fatou property:
\begin{itemize}
\item \textit{Fatou property:} If $0\leq f_n \uparrow f$ for $(f_n)_{n\geq 1}$ in $X$ and $\sup_{n\geq 1}\nrm{f_n}_X<\infty$, then $f \in X$ and $\nrm{f}_X=\sup_{n\geq 1}\nrm{f_n}_X$.
\end{itemize}
One readily checks that the Fatou property implies the Riesz--Fischer property and thus completeness. Indeed, by the quasi-triangle inequality and induction on $N\geq 1$ we have
\[
\Big\|\sum_{n=1}^N f_n\Big\|_X\leq \sum_{n=1}^N K_X^n\|f_n\|_X.
\]
The Riesz--Fischer property then follows by using the Fatou property on the partial sums $\sum_{n=1}^N |f_n|$. 
\item     In some parts of the literature, the underlying measure space $(\Omega,\mu)$ is assumed to be complete. We do not assume completeness as this assumption is superfluous in the following sense. Suppose that $(\Omega,\mu)$, with the $\sigma$-algebra $\Sigma$, is \emph{not} complete. Denoting its completion (which is again $\sigma$-finite) by $\Sigma^\ast$ with measure $\mu^\ast$, there is a natural one-to-one correspondence between the measurable functions with respect to $\Sigma$ and with respect to $\Sigma^\ast$. Indeed, for each $f^\ast$ that is measurable with respect to $\Sigma^\ast$ there exists an $f$ that is measurable with respect to $\Sigma$ such that $f^\ast=f$ $\mu$-a.e. Thus, any quasi-Banach function space over $(\Omega,\mu)$ may as well be considered over $(\Omega,\mu^\ast)$.
    \end{enumerate}
\end{remark}

By the Aoki--Rolewicz theorem \cite{Ao42,Ro57} 
\begin{equation} \label{eq:Aoki}
\nnrm{f} := \inf\cbraces{\has{\sum_{k=1}^n \nrm{f_k}_X^p}^{1/p}: f_1,\ldots,f_n \in X \text{ such that } \sum_{k=1}^n f_k = f}
\end{equation}
is an equivalent \emph{$p$-norm} on $X$ for $p \in (0,1]$ with $2^{1/p} = 2K_X$, i.e. $\nnrm{\,\cdot\,}$ is a quasi-norm on $X$ such that
\begin{align*}
\vvvert{f+g}^p &\leq \vvvert{f}^p + \vvvert{ g}^p, &f,g\in X\\
4^{-1/p}\nrm{f}_X &\leq \nnrm{f} \leq \nrm{f}_X, &f \in X,
\end{align*}
see, e.g., \cite{KPR84}. It is a straightforward check to see that $(X,\nnrm{\,\cdot\,})$ is again a quasi-Banach function space.

\subsection{Completeness}\label{subs:completeness}
Let us discuss the defining properties of a quasi-Banach function space $X$ in some detail. To start, we note that the assumed completeness can be reformulated as the Riesz-Fischer property (see Remark \ref{rem:def}\ref{it:def2} for the definition). Indeed, if a quasi-Banach space $X$ is complete, then it satisfies the Riesz--Fischer property. To see this, note that by the quasi-triangle inequality and induction on $N\geq 1$ we have
\[
\Big\|\sum_{n=1}^N f_n\Big\|_X\leq \sum_{n=1}^N K_X^n\|f_n\|_X.
\]
A standard argument then shows that the partial sums $F_N:=\sum_{n=1}^N f_n$ are a Cauchy sequence in $X$, proving that $F:=\sum_{n=1}^\infty f_n\in X$. The assertion then follows from noting that 
\[
\|F\|_X\leq K_X\|F-F_N\|_X+K_X\sum_{n=1}^\infty K_X^n\|f_n\|_X
\]
and letting $N\to\infty$. The converse statement is also true. For a proof, we refer the reader to \cite[Theorem 1.1]{Ma04}.
\begin{proposition}\label{prop:complete}
Let $X$ be a quasi-normed space. Then $X$ is complete if and only if $X$ satisfies the Riesz--Fischer property.
\end{proposition}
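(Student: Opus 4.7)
The forward direction is essentially already given in the paragraph preceding the proposition, so the real work lies in the converse. The overall plan is to exploit the standard correspondence between Cauchy sequences and absolutely convergent series, adapted to the quasi-norm setting where the geometric factors $K_X^n$ absorb iterated applications of the quasi-triangle inequality.

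For the easy direction, I would assume $X$ is complete, take $(f_n)_{n\geq 1}$ in $X$ with $\sum_{n=1}^\infty K_X^n \|f_n\|_X < \infty$, and apply the already noted inequality $\|\sum_{n=1}^N f_n\|_X \leq \sum_{n=1}^N K_X^n\|f_n\|_X$ to tails in order to show that the partial sums $F_N$ form a Cauchy sequence. By completeness they converge to some $F \in X$, and passing $N \to \infty$ in $\|F\|_X \leq K_X \|F - F_N\|_X + K_X \|F_N\|_X$ yields the desired Riesz--Fischer bound.

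For the converse, assume the Riesz--Fischer property and let $(g_n)_{n\geq 1}$ be Cauchy in $X$. The key move is to extract a subsequence $(g_{n_k})_{k\geq 1}$ decaying fast enough that the series of differences satisfies the Riesz--Fischer hypothesis. Concretely, I would pick $n_1 < n_2 < \cdots$ so that $\|g_m - g_n\|_X \leq (2K_X)^{-k}$ whenever $m,n \geq n_k$; this produces $K_X^k \|g_{n_{k+1}} - g_{n_k}\|_X \leq 2^{-k}$, which is summable. Riesz--Fischer then delivers $g := g_{n_1} + \sum_{k=1}^\infty (g_{n_{k+1}} - g_{n_k}) \in X$, and re-applying the Riesz--Fischer bound to the reindexed tail $\sum_{j \geq k}(g_{n_{j+1}} - g_{n_j})$ shows $g_{n_k} \to g$ in $X$.

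To wrap up, I would upgrade this subsequential convergence to convergence of the full sequence via $\|g_n - g\|_X \leq K_X\bigl(\|g_n - g_{n_k}\|_X + \|g_{n_k} - g\|_X\bigr)$, using the Cauchy condition to control the first term and the just-established $g_{n_k} \to g$ to control the second. The only delicate bookkeeping concerns calibrating the decay rate: the factor $K_X^k$ hard-coded into the Riesz--Fischer hypothesis forces the subsequence to shrink at the geometric rate $(2K_X)^{-k}$ rather than the more familiar $2^{-k}$ one would use for a genuine Banach space, but once that is accounted for, the argument proceeds exactly as in the normed case.
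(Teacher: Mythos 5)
Your proposal is correct. The forward direction you give is verbatim the argument the paper itself supplies in the paragraph preceding the proposition, so nothing to check there. For the converse, the paper does not actually give a proof: it simply cites \cite[Theorem 1.1]{Ma04}. Your argument fills that gap with the standard subsequence-extraction scheme, and the bookkeeping is right: choosing $n_k$ so that $\|g_m-g_n\|_X\leq (2K_X)^{-k}$ for $m,n\geq n_k$ gives $K_X^k\|g_{n_{k+1}}-g_{n_k}\|_X\leq 2^{-k}$, which is exactly what the hypothesis $\sum_n K_X^n\|f_n\|_X<\infty$ demands, and reapplying the Riesz--Fischer bound to the tail $\sum_{j\geq k}(g_{n_{j+1}}-g_{n_j})$ yields $\|g-g_{n_k}\|_X\leq K_X(2K_X)^{-(k-1)}\to 0$. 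One point worth making explicit if you write this up: the statement ``$\sum_{n=1}^\infty f_n\in X$'' in the Riesz--Fischer property is ambiguous between norm convergence of the partial sums and (in the function-space setting) pointwise a.e.\ summation with a norm bound on the sum. Under the first reading, the first application of Riesz--Fischer already gives $g_{n_k}\to g$ in norm and your tail estimate is redundant; under the second, the tail estimate is precisely what establishes $g_{n_k}\to g$. Your proof works under either reading, which is a virtue, but you should fix one interpretation and say which step is doing the work. The final upgrade from subsequential to full convergence via the quasi-triangle inequality is standard and correct.
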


For $X \subseteq L^0(\Omega)$ the Riesz--Fischer property ensures that convergence in the norm of $X$ implies local convergence in measure, i.e., the embedding $X\hookrightarrow L^0(\Omega)$ is continuous. As $L^0(\Omega)$ equipped with the topology of local convergence in measure is a Hausdorff space, this ensures uniqueness of limits. More precisely, convergence in the quasi-norm of $X$ implies
 pointwise a.e. convergence for a subsequence, so that the pointwise a.e. limit and the limit in the quasi-norm of $X$ coincide whenever both exist.
 \begin{proposition}\label{prop:convinmeasure}
    Let $X$ be a quasi-Banach function space over $(\Omega,\mu)$, let $(f_n)_{n\geq 1}$ be a sequence in $X$ and let $f \in X$.
    \begin{enumerate}[(i)]
        \item\label{it:conv1} If $(f_n)_{n\geq 1}$ is Cauchy in $X$, then $(f_n)_{n\geq 1}$ is locally Cauchy in measure.
        \item\label{it:conv2}  If $(f_n)_{n\geq 1}$ converges to $f$ in $X$, then $(f_n)_{n\geq 1}$ converges locally in measure to $f$.
    \end{enumerate}
In particular, if $f_n \to f$ in $X$ and $f_n \to g $ pointwise a.e., then $f=g$ a.e.
\end{proposition}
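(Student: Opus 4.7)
The plan is to prove (ii) first, then deduce (i) from (ii) combined with completeness of $X$, and finally handle the concluding uniqueness statement via uniqueness of limits in the Hausdorff topology of local convergence in measure on $L^0(\Omega)$. After replacing $f_n$ by $f_n - f$, it suffices for (ii) to show that $\nrm{f_n}_X \to 0$ implies $f_n \to 0$ locally in measure. Since this topology on $L^0(\Omega)$ is metrizable, we may further reduce to showing that every subsequence of $(f_n)$ admits a further subsequence converging to $0$ pointwise a.e.

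Given such a subsequence, we thin it out to obtain $(f_{n_k})_{k \geq 1}$ with $\nrm{f_{n_k}}_X \leq (2K_X)^{-k}$. The ideal property gives $\nrm{\abs{f_{n_k}}}_X = \nrm{f_{n_k}}_X$, so $\sum_{k=1}^\infty K_X^k \nrm{\abs{f_{n_k}}}_X \leq \sum_{k=1}^\infty 2^{-k} < \infty$, and the Riesz--Fischer property (Proposition~\ref{prop:complete}) yields $\sum_{k=1}^\infty \abs{f_{n_k}} \in X \subseteq L^0(\Omega)$. In particular, this pointwise sum of nonnegative measurable functions is finite a.e., so $f_{n_k}(x) \to 0$ for a.e.\ $x \in \Omega$. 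Applying Egorov's theorem on each set of finite measure\textemdash{}using the $\sigma$-finiteness of $\Omega$\textemdash{}upgrades this to local convergence in measure, completing (ii).

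For (i), completeness of $X$ provides an $X$-limit $f$ of the Cauchy sequence $(f_n)$, and (ii) applied to $f_n \to f$ gives local convergence in measure, so in particular $(f_n)$ is locally Cauchy in measure. For the final assertion, (ii) yields $f_n \to f$ locally in measure, while the hypothesis $f_n \to g$ pointwise a.e.\ gives $f_n \to g$ locally in measure (again via Egorov on each finite-measure subset), and uniqueness of limits in the Hausdorff topology forces $f = g$ a.e. The main subtle point to verify is the identification, implicit in the Riesz--Fischer property for a function space $X \subseteq L^0(\Omega)$, of the abstract series $\sum_k \abs{f_{n_k}}$ regarded as an element of $X$ with the pointwise a.e.\ sum of the nonnegative functions $\abs{f_{n_k}}$; this is precisely what converts norm-summability into pointwise a.e.\ finiteness and thereby closes the argument.
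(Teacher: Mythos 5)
Your proof is correct in outline but takes a genuinely different route from the paper's, and the one point you explicitly leave ``to verify'' is the crux and does need an argument. The paper's proof works by contradiction with indicator functions: it fixes $E$ of finite measure, sets $A_{j,k}=\{x\in E: |f_j(x)-f_k(x)|\geq\varepsilon\}$, bounds $\|\ind_{A_{j,k}}\|_X\leq \varepsilon^{-1}\|f_j-f_k\|_X$ by the ideal property, and then, assuming $\mu(A_{j,k})\not\to 0$, extracts sets $B_n$ with $\mu(B_n)\geq\delta$ and $\|\ind_{B_n}\|_X\leq 2^{-n}K_X^{-n-1}$, applies Riesz--Fischer to get $\|\ind_{\bigcup_{n\geq m}B_n}\|_X\leq 2^{-m+1}$, and derives a contradiction from $B=\bigcap_m\bigcup_{n\geq m}B_n$ having $\|\ind_B\|_X=0$ yet $\mu(B)\geq\delta$ by continuity from above inside $E$. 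Your route instead proves the $L^p$-style statement that a norm-null sequence admits an a.e.-convergent subsequence and then invokes the subsequence criterion for the metrizable topology of local convergence in measure; this is more direct and yields the stronger, independently useful fact that convergence in $X$ gives a.e.\ convergence along a subsequence.

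The point you must close: Riesz--Fischer, as derived from completeness, only produces the $X$-limit $F$ of the partial sums $S_N=\sum_{k=1}^N|f_{n_k}|$, and a priori the pointwise sum could be $+\infty$ on a set of positive measure while $F\in L^0(\Omega)$ is finite a.e. One cannot appeal to ``$X$-convergence implies a.e.\ convergence of a subsequence'' here, since that is what is being proved. The fix is short and non-circular: the positive cone of $X$ is closed, because if $0\leq g_n\to g$ in $X$ then $\bigl\||g_n|-|g|\bigr\|_X\leq\|g_n-g\|_X\to 0$ by the ideal property, so $g_n\to g$ and $g_n=|g_n|\to|g|$ force $g=|g|\geq 0$ a.e.\ by uniqueness of limits in the quasi-normed space $X$. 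Applying this to $S_M-S_N\geq 0$ and letting $M\to\infty$ gives $F-S_N\geq 0$ a.e.\ for every $N$, hence $\sup_N S_N\leq F<\infty$ a.e.\ and $f_{n_k}\to 0$ a.e., which is all you need. (The paper's proof quietly uses the same identification when it deduces $\ind_{\bigcup_{n\geq m}B_n}\in X$ from Riesz--Fischer, so the issue is not unique to your approach; but since you single it out, you should supply these two lines.) The remaining steps---deducing (i) from (ii) via completeness, and the final uniqueness claim via Hausdorffness of local convergence in measure---are fine.
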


\begin{proof}
We will only prove \ref{it:conv1}, the proof of \ref{it:conv2}  is similar. Fix a measurable $E \subseteq \Omega$ with $\mu(E)<\infty$, let $\varepsilon>0$ and define for $j,k \geq 1$
$$
A_{j,k} = \cbrace{x \in E: \abs{f_j(x)-f_k(x)}\geq \varepsilon}.
$$
We need to show that $\lim_{j,k \to \infty} \mu\ha{A_{j,k}}=0$.

Since $(f_n)_{n\geq 1}$ is Cauchy in $X$, we know by the ideal property that
$$
\lim_{j,k \to \infty} \nrm{\ind_{A_{j,k}}}_X \leq \lim_{j,k \to \infty} \varepsilon^{-1} \nrm{f_j-f_k}_X = 0.
$$
Suppose that $\mu(A_{j,k})\not\to 0$ for $j,k \to \infty$. Then we can find a $\delta>0$ and a sequence of measurable sets $(B_n)_{n\geq 1}$ in $\{A_{j,k}\}$ such that $\mu(B_n) \geq \delta$ for all $n\geq 1$ and $\nrm{\ind_{B_n}}_X \to 0$ for $n\to \infty$. By considering a subsequence if necessary, we may furthermore assume that $\nrm{\ind_{B_n}}_X  \leq 2^{-n}K_X^{-n-1}$ for all $n\geq 1$.
By the Riesz--Fischer property, it follows for $m\geq 1$ that $\ind_{\bigcup_{n=m}^\infty B_{n}} \in X$ with
$$
\nrmb{\ind_{\bigcup_{n=m}^\infty B_{n}}}_X \leq K_X\sum_{n=m}^\infty K_X^{n-m+1} \nrm{\ind_{B_{n}}}_X \leq \sum_{n=m}^\infty 2^{-n} = 2^{-m+1}.
$$

Define $B = \bigcap_{m=1}^\infty \bigcup_{n=m}^\infty B_n$. 
Then we have, by the ideal property, that for all $m\geq 1$
$$
\nrm{\ind_{B}}_X \leq  \nrmb{\ind_{\bigcup_{n=m}^\infty B_{n}}}_X\leq  2^{-m+1}
$$
and thus $\nrm{\ind_B}_X = 0$. This means that $\ind_B =0$ a.e. and consequently $\mu\ha{B}=0$. But $\mu\ha{B_n} \geq \delta$ for all $n\geq 1$ and therefore $\mu\hab{\bigcup_{n=m}^\infty B_{n}} \geq \delta$ for all $m\geq 1$. Since $$\mu\has{\bigcup_{n=1}^\infty B_{n}} \leq \mu\ha{E}<\infty,$$ we conclude that  $\mu\ha{B}\geq \delta$, a contradiction. Thus, we must have $\lim_{j,k \to \infty} \mu\ha{A_{j,k}}=0$.
\end{proof}

 \begin{remark} The \emph{local} Cauchy (or convergence) in measure in Proposition \ref{prop:convinmeasure} can not be replaced by \emph{global} Cauchy (or convergence) in measure. Indeed, take $\Omega = \R$ equipped with the Lebesgue measure $\ddn x$ and define $w(x) := \ee^{-\abs{x}}$. Let $X = L^1_w(\R)$, i.e. the space of all $f \in L^0(\R)$ such that
     $$
     \nrm{f}_{L^1_w(\R)} := \int_\R \abs{f}w\dd x <\infty.
     $$
     The sequence of functions $(\ind_{[n,n+1]})_{n\geq 1}$ converges to zero in $X$, but
     $$
    \absb{\{x \in \R: \abs{\ind_{[j,j+1]}(x)-\ind_{[k,k+1]}(x)}\geq 1\}} =2 
     $$
     for $j\neq k$, i.e. $\ind_{[n,n+1] }$ is not globally Cauchy in measure.  
 \end{remark}

\subsection{The ideal property}\label{subs:idealprop}
The first property of a quasi-Banach function space is called the ideal property, since it ensures that $X$ is a so-called \emph{order ideal} in the vector lattice $L^0(\Omega)$. This implies that $X$
is a quasi-Banach lattice, which explains why a quasi-Banach function space is sometimes called an \emph{ideal quasi-Banach lattice of functions} \cite{KPS82}. In particular, any result for (quasi-)Banach lattices also holds for (quasi-)Banach function spaces.
We refer the reader to e.g. \cite{LT79,MN91,Sc74} for a thorough study of Banach lattices. 

Furthermore, let us note that it follows from the ideal property that for any $f \in L^0(\Omega)$ we have $f\in X$ if and only if $|f|\in X$ with $\nrm{|f|}_X=\nrm{f}_X$.

 \subsection{The saturation property}\label{subsec:saturation}
The second property of a quasi-Banach function space $X$, the saturation property, has already been discussed in the introduction. It is imposed to avoid trivialities. Indeed, 
it ensures that there are no measurable sets $E \subseteq \Omega$ on which all elements of $X$ vanish a.e. This assumption is not restrictive, as any quasi-normed vector space of measurable functions on $\Omega$ can be made saturated by removing the part of $\Omega$ on which all functions in $X$ vanish a.e. (cf. \cite[Section 67]{Za67}). 

To illustrate this, consider the space $X\subseteq L^0(\R)$ defined as the subspace of $L^1(\R)$ consisting of the integrable functions supported in $[0,1]$. When equipped with the norm 
\[
\|f\|_X:=\int_0^1\!|f|\,\mathrm{d}x,
\]
the space $(X, \nrm{\cdot}_X)$ is complete and satisfies the ideal property, but not the saturation property. We also note that any $g\in L^0(\R)$ supported outside of $[0,1]$ satisfies $\int_\R\!|fg|\,\mathrm{d}x=0$ for all $f\in X$. These problems can easily be rectified by considering this as a space over $\Omega=[0,1]$ rather than over $\Omega=\R$, in which case it is saturated, and we simply have $X=L^1([0,1])$.

\bigskip

There are various equivalent formulations of the saturation property. Especially the existence of a weak order unit is often useful in applications, see also Subsection \ref{subsec:weights}.
\begin{proposition}\label{prop:weakorderunit}
Let $(\Omega,\mu)$ be a $\sigma$-finite measure space and let $X \subseteq L^0(\Omega)$ be a quasi-Banach space satisfying the ideal property.
Then the following are equivalent:
\begin{enumerate}[(i)]
\item \label{it:propsat1} $X$ satisfies the saturation property;
\item\label{it:propsat2} There is an increasing sequence of sets $F_n\subseteq\Omega$ with $\ind_{F_n}\in X$ and $\bigcup_{n=1}^\infty F_n=\Omega$;
\item\label{it:propsat3} $X$ has a \emph{weak order unit}, i.e., there is a $u\in X$ with $u>0$ a.e.;
\item\label{it:propsat4} If $g \in L^0(\Omega)$ with $\int_\Omega\!|fg|\,\mathrm{d}\mu= 0$ for all $f \in X$, then $g = 0$ a.e.
\end{enumerate}
\end{proposition}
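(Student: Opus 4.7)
The plan is to prove the four conditions equivalent by establishing the cycle \ref{it:propsat1} $\Rightarrow$ \ref{it:propsat2} $\Rightarrow$ \ref{it:propsat3} $\Rightarrow$ \ref{it:propsat4} $\Rightarrow$ \ref{it:propsat1}, relying on Proposition~\ref{prop:complete} to treat completeness of $X$ as the Riesz--Fischer property.

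For \ref{it:propsat1} $\Rightarrow$ \ref{it:propsat2}, I would combine $\sigma$-finiteness with a standard exhaustion argument. Writing $\Omega = \bigcup_{k \geq 1} \Omega_k$ with $\mu(\Omega_k) < \infty$, fix $k$ and set $\alpha_k := \sup\{\mu(F) : F \subseteq \Omega_k \text{ measurable}, \ind_F \in X\} \leq \mu(\Omega_k)$. Choose $F_j \subseteq \Omega_k$ with $\ind_{F_j} \in X$ and $\mu(F_j) \to \alpha_k$, and set $H_j^{(k)} := F_1 \cup \cdots \cup F_j$, so $\ind_{H_j^{(k)}} \in X$ by the ideal property applied to the dominating sum $\sum_{i \leq j}\ind_{F_i}$, and $\mu(H_j^{(k)}) \uparrow \alpha_k$. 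If $G_k := \bigcup_j H_j^{(k)}$ did not exhaust $\Omega_k$ up to a null set, the saturation hypothesis on $\Omega_k \setminus G_k$ would produce $F$ of positive measure with $\ind_F \in X$, whence $\ind_{H_j^{(k)} \cup F} \in X$ would yield $\mu(H_j^{(k)} \cup F) > \alpha_k$ for large $j$, contradicting the supremum. Setting $F_n := \bigcup_{k=1}^n H_n^{(k)}$ (after absorbing the cumulative null set) then gives an increasing sequence with $\ind_{F_n} \in X$ and $\bigcup_n F_n = \Omega$.

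For \ref{it:propsat2} $\Rightarrow$ \ref{it:propsat3}, I would take $c_n := 2^{-n} K_X^{-n}(\nrm{\ind_{F_n}}_X + 1)^{-1}$ and define $u := \sum_{n=1}^\infty c_n \ind_{F_n}$. The series satisfies $\sum_n K_X^n c_n \nrm{\ind_{F_n}}_X \leq \sum_n 2^{-n} < \infty$, so the Riesz--Fischer property grants $u \in X$; and since every $x \in \Omega$ lies in some $F_{n_0}$ and hence in every $F_n$ with $n \geq n_0$, we get $u(x) \geq \sum_{n \geq n_0} c_n > 0$. The step \ref{it:propsat3} $\Rightarrow$ \ref{it:propsat4} is then immediate: testing the hypothesis against $f = u$ gives $\int |ug|\dd\mu = 0$, and since $u > 0$ a.e.\ this forces $g = 0$ a.e.

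Finally, for \ref{it:propsat4} $\Rightarrow$ \ref{it:propsat1} I would argue contrapositively. If \ref{it:propsat1} fails, fix $E$ of positive measure admitting no subset $F \subseteq E$ of positive measure with $\ind_F \in X$. Then every $f \in X$ vanishes a.e.\ on $E$: otherwise $\{|f| > 1/n\} \cap E$ would have positive measure for some $n$, and its indicator, being dominated by $n|f|$, would belong to $X$ by the ideal property. Taking $g := \ind_E$ therefore satisfies $\int |fg|\dd\mu = 0$ for every $f \in X$ while $g \neq 0$, violating \ref{it:propsat4}. The main obstacle is the exhaustion step in \ref{it:propsat1} $\Rightarrow$ \ref{it:propsat2}, where one must resist assuming $\ind_{G_k} \in X$ (which would require a Fatou-type hypothesis) and instead extract only the increasing finitary sequence $(H_n^{(k)})$, which is nevertheless enough to assemble the desired $F_n$.
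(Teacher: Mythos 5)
Your proof is correct, and three of the four implications (\ref{it:propsat2}$\Rightarrow$\ref{it:propsat3} via the weighted series $\sum_n c_n\ind_{F_n}$ and Riesz--Fischer, \ref{it:propsat3}$\Rightarrow$\ref{it:propsat4} by testing against the weak order unit, and \ref{it:propsat4}$\Rightarrow$\ref{it:propsat1} by the contrapositive with $F_n=\{|f|>1/n\}\cap E$) coincide with the paper's argument essentially line for line. The one place you genuinely diverge is \ref{it:propsat1}$\Rightarrow$\ref{it:propsat2}: the paper outsources this step to \cite[Theorem~67.4]{Za67} (remarking only that the cited proof survives in the quasi-normed setting), whereas you carry out the exhaustion argument explicitly --- taking the supremum $\alpha_k$ of measures of admissible subsets of each $\Omega_k$, extracting finite unions $H_j^{(k)}$ whose indicators lie in $X$ by the ideal property, and using saturation on the residual set to force $\mu(G_k)=\mu(\Omega_k)$. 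Your closing remark is exactly the right point of care: one may not conclude $\ind_{G_k}\in X$ without a Fatou-type hypothesis, and your construction correctly assembles the final sequence $F_n$ from the finite-stage sets only. The only blemish is notational: you reuse the symbol $F_j$ both for the approximating subsets of $\Omega_k$ and for the final exhausting sequence, which should be disambiguated.
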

\begin{proof}
We start by proving \ref{it:propsat1}$\Rightarrow$\ref{it:propsat2}. Assume that $X$ has the saturation property. Since $\Omega$ is $\sigma$-finite, it follows from \cite[Theorem~67.4]{Za67} that there exists an increasing sequence of measurable sets $F_n\subseteq\Omega$ with $\ind_{F_n}\in X$ and $\bigcup_{n=1}^\infty F_n=\Omega$. Note that while this result is stated for normed spaces, the proof also holds without change in quasi-normed spaces.

For \ref{it:propsat2}$\Rightarrow$\ref{it:propsat3}, we define
\[
u:=\sum_{n=1}^\infty \frac1{(2K_X)^n}\cdot\frac{\ind_{F_n}}{1+\nrm{\ind_{F_n}}_X}.
\]
By the Riesz--Fischer property, we have $u\in X$. So \ref{it:propsat3} follows from the fact that $u>0$ on $\Omega$.

For \ref{it:propsat3}$\Rightarrow$\ref{it:propsat4}, let $g \in L^0(\Omega)$ such that $\|fg\|_{L^1(\Omega)}=0$ for all $f\in X$. In particular, we have $\nrm{u g}_{L^1(\Omega)}=0$. This means that $u g=0$ a.e. and hence, since $u>0$ a.e., we must have $g=0$ a.e., as desired.

It remains to show \ref{it:propsat4}$\Rightarrow$\ref{it:propsat1}. Assume that $X$ does not have the saturation property. Then there is a set $E\subseteq\Omega$ of positive measure such that $\ind_F\notin X$ for all $F\subseteq E$ of positive measure. For $f\in X$, consider the sets $F_n:=\{x\in E:|f(x)|>\frac{1}{n}\}$ for $n\geq 1$. Since
\[
\ind_{F_n}\leq nf\in X,
\]
it follows from the ideal property of $X$ that $\ind_{F_n}\in X$ for all $n\geq 1$. But this means that $\mu(F_n)=0$ and hence,
\[
\mu\hab{\{x\in E:|f(x)|>0\}}=\mu\has{\bigcup_{n=1}^\infty F_n}\leq \sum_{n=1}^\infty\mu(F_n)=0.
\]
As this means that every function $f\in X$ vanishes a.e. on $E$, we have 
$\int_\Omega\! |f| \ind_E\,\mathrm{d}\mu = 0$ for all $f \in X$.
Since $\ind_E\neq0$, this proves the result by contraposition.
\end{proof}

\begin{remark}
By Proposition \ref{prop:weakorderunit} and our discussion in Subsection \ref{subs:idealprop}, using the terminology from Banach lattice theory, one could equivalently define a (quasi)-Banach function space as a (quasi)-Banach lattice of measurable functions such that:
\begin{itemize}
\item $X$ is an order ideal in $L^0(\Omega)$;
\item $X$ has  a weak order unit.
\end{itemize}
\end{remark}

\begin{remark}\label{rem:defBS}
    After the first chapter on general Banach function spaces, the book of Bennett and Sharpley \cite{BS88} is mainly focused on the case of so-called \emph{rearrangement-invariant} Banach function spaces, i.e. Banach function spaces $X$ such that any  $f,g\in X$ with the same distribution function have equal norm. In such spaces, it is easy to see that the saturation property is equivalent to the assumption \eqref{eq:illegal} and therefore also to  \eqref{eq:illegaldual} by Theorem \ref{thm:dualofbfsisbfs} below. This explains the choice for the ``simpler'' setup of Banach function spaces using \eqref{eq:illegal} and \eqref{eq:illegaldual} in \cite{BS88}.
\end{remark}

\begin{remark}\label{rem:bBFS}
Let $(\Omega,\mu)$ be a metric measure space.
    Ball quasi-Banach function spaces, as introduced in \cite{SHYY17}, satisfy the saturation property. Indeed, the sets $F_n=B(x,n)$ satisfy property \ref{it:propsat2} in Proposition~\ref{prop:weakorderunit}, where $B(x,n)$ denotes the ball around a point $x\in \Omega$ with radius $n$. In particular, every ball quasi-Banach function space is a quasi-Banach function space. Since the measure space over which a quasi-Banach function space is defined does not necessarily need to have a metric structure, the notion of a quasi-Banach function space is more general than that of a ball quasi-Banach function space.

    Conversely, if $X$ is a quasi-Banach function space on which the Hardy--Littlewood maximal operator is bounded, it is easy to see that  $\ind_B \in X$ for all balls $B$. So in this specific case, $X$ is automatically a ball quasi-Banach function space. 
\end{remark}

\section{Properties of Banach function spaces}
Having discussed the definition of quasi-Banach function spaces at length in the previous section, we will discuss some basic properties of quasi-Banach function spaces in this  section. We start by introducing the notion of K\"othe duality, which is the notion of duality within the category of Banach function spaces. 

Next, we will discuss important lattice properties that a Banach function space has through analogues of the classical convergence theorems in integration theory. We will start by discussing the Fatou property, which is the replacement of the monotone convergence theorem and Fatou's lemma. Then we discuss the notion of order-continuity, which serves as a replacement of the dominated convergence theorem.

Finally, we will discuss how the theory naturally includes weighted spaces (such as weighted Lebesgue spaces) in its definition through the saturation property. We show two ways of considering weights; by adding them to the underlying measure space, or by considering them as a multiplier.

\subsection{Duality}\label{subsect:duality} Duality arguments play an important role in mathematical analysis. For example, when working in $L^p(\R^d)$, duality often allows one to translate results for $1\leq p\leq 2$ to $2\leq p\leq \infty$ and vice versa. Unfortunately, the Banach dual $X^*$ of a quasi-Banach function space $X$ is not necessarily isomorphic to a space of functions. For example, the Banach dual of $L^\infty(\R^d)$ is a space of measures. 

Motivated by this phenomenon, we define the \emph{K\"othe dual} or \emph{associate space} $X'$ of a quasi-Banach function space $X \subseteq L^0(\Omega)$ as the the space
\[
X':=\{g\in L^0(\Omega):fg\in L^1(\Omega)\text{ for all $f\in X$}\}.
\]
For $g \in X'$ we define 
\[
\nrm{g}_{X'}:=\sup_{\nrm{f}_X=1}\|fg\|_{L^1(\Omega)},
\]
which is a norm on $X'$. Indeed, as shown in Proposition \ref{prop:weakorderunit}, the saturation property ensures (and is equivalent to the statement) that $\nrm{g}_{X'} = 0$ if and only if $g=0$ a.e. Moreover, $\nrm{g}_{X'}<\infty$. Indeed, suppose $\nrm{g}_{X'}=\infty$. Then, for each $n\geq 1$, there is an $f_n\in X$ with $\nrm{f_n}_X=1$ for which
$
\|f_n g\|_{L^1(\Omega)}>K_X^n n^3.
$
By the Riesz--Fischer property, we have $$F:=\sum_{n=1}^\infty\frac{|f_n|}{K_X^n n^2}\in X.$$ However, since also
\[
\|Fg\|_{L^1(\Omega)}\geq \frac{\nrm{f_n g}_{L^1(\Omega)}}{K_X^n n^2}>n
\]
for all $n\geq 1$, we deduce that $Fg\notin L^1(\Omega)$. By contraposition, we conclude that $\nrm{g}_{X'}<\infty$ for all $g \in X'$.

\bigskip

The ideal property of $L^1(\Omega)$ implies that $X'$ also satisfies the ideal property. Moreover, $X'$ satisfies the Fatou property (see Remark \ref{rem:def}\ref{it:def2} or Subsection \ref{subs:Fatou} for the definition), which implies the Riesz--Fischer property and, hence, by Proposition \ref{prop:complete} that $X'$ is complete. Indeed, if $0\leq g_n \uparrow g$ for $(g_n)_{n\geq 1}$ in $X'$ and $\sup_{n\geq 1}\nrm{g_n}_{X'}<\infty$, then, by the monotone convergence theorem, for every $f\in X$ we have $fg \in L^1(\Omega)$ with
\[
\|f g \|_{L^1(\Omega)}=\sup_{n \geq 1} \|f g_n  \|_{L^1(\Omega)} \leq \|f\|_X  \sup_{n \geq 1} \|g_n  \|_{X'},
\]
and therefore $g \in X'$ with $\nrm{g}_{X'}=\sup_{n\geq 1}\nrm{g_n}_{X'}$.

\bigskip

The map $f \mapsto \int_\Omega fg \dd\mu$ defines a bounded linear functional on $X$ for every $g \in X'$. Thus, we can naturally identify $X'$ with a closed subspace of $X^*$. Indeed, we have the following result:
\begin{proposition}\label{prop:kothedualembeddingdual}
Let $X$ be a quasi-Banach function space over $(\Omega,\mu)$. The embedding $\iota\colon X'\hookrightarrow X^\ast$ given by
\[
\iota(g)(f):=\int_\Omega\!fg\,\mathrm{d}\mu, \qquad f \in X,
\]
satisfies
$
\|\iota(g)\|_{X^\ast}=\|g\|_{X'}
$ for all $g \in X'$.
\end{proposition}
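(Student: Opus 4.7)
The plan is to prove the two inequalities $\|\iota(g)\|_{X^\ast} \leq \|g\|_{X'}$ and $\|\iota(g)\|_{X^\ast} \geq \|g\|_{X'}$ separately. The first is essentially tautological from the definition of $\|g\|_{X'}$, while the second requires a standard sign-twisting trick to pass from the complex integral $\int fg\,\ddn\mu$ to the absolute value integral $\int |fg|\,\ddn\mu$.

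For the upper bound, I would fix $g \in X'$ and $f \in X$ and simply estimate
\[
\abs{\iota(g)(f)} = \abss{\int_\Omega fg \dd\mu} \leq \int_\Omega \abs{fg}\dd\mu \leq \|g\|_{X'}\|f\|_X,
\]
where the last step uses the homogeneity of the defining supremum for $\|g\|_{X'}$ (applied to $f/\|f\|_X$ when $f\neq 0$). Taking the supremum over $\|f\|_X = 1$ yields $\|\iota(g)\|_{X^\ast} \leq \|g\|_{X'}$.

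For the lower bound, I would, given any $f \in X$, construct a companion function $\widetilde{f} \in X$ with the same modulus as $f$ but for which the complex integral achieves the real one. Concretely, define $\sgn(z) := z/|z|$ for $z\neq 0$ and $\sgn(0):=0$, and set
\[
\widetilde{f} := \abs{f}\,\conj{\sgn(g)}.
\]
Then $|\widetilde f| \leq |f|$, so by the ideal property $\widetilde{f}\in X$ with $\|\widetilde f\|_X \leq \|f\|_X$; applying the ideal property in the reverse direction (since also $|f|\leq|\widetilde f|$ on $\{g\neq 0\}$; on $\{g=0\}$ both $fg$ and $\widetilde f g$ vanish so we may simply replace $f$ on $\{g=0\}$ by $|f|\conj{\sgn g} + f\ind_{\{g=0\}}$ if needed) one gets $\|\widetilde f\|_X = \|f\|_X$. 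Actually, the cleanest route is to observe that $\widetilde f g = |f||g|$ pointwise, compute
\[
\iota(g)(\widetilde f) = \int_\Omega \widetilde f\, g\dd\mu = \int_\Omega \abs{fg}\dd\mu = \nrm{fg}_{L^1(\Omega)},
\]
and note $|\widetilde f|\leq |f|$ gives $\|\widetilde f\|_X \leq \|f\|_X$. Thus for any $f$ with $\|f\|_X = 1$,
\[
\nrm{\iota(g)}_{X^\ast} \geq \frac{\abs{\iota(g)(\widetilde f)}}{\|\widetilde f\|_X} \geq \nrm{fg}_{L^1(\Omega)},
\]
and taking the supremum over such $f$ produces $\|\iota(g)\|_{X^\ast}\geq \|g\|_{X'}$.

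The only mild subtlety, and the step I would be most careful about writing out, is the measurability and the fact that $\widetilde f \in X$ with norm at most $\|f\|_X$: this rests entirely on the ideal property together with measurability of $\sgn(g)$ (which follows from measurability of $g$). Otherwise the argument is a direct chase through the definitions and needs no appeal to duality theory, the Fatou property, or completeness.
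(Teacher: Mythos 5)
Your proof is correct and follows essentially the same route as the paper: the upper bound is the triangle inequality, and for the lower bound the paper uses the companion function $\widetilde f:=\abs{fg}g^{-1}\ind_{\{g\neq 0\}}$, which coincides with your $\abs{f}\,\overline{\sgn(g)}$, together with $\abs{\widetilde f}\leq\abs{f}$ and the ideal property. The only cosmetic difference is that the paper avoids dividing by $\|\widetilde f\|_X$ (sidestepping the case $\widetilde f=0$) by writing $\int_\Omega\abs{fg}\dd\mu=\abs{\iota(g)(\widetilde f)}\leq\|\iota(g)\|_{X^\ast}\|\widetilde f\|_X\leq\|\iota(g)\|_{X^\ast}\|f\|_X$ directly; your parenthetical attempt to upgrade to $\|\widetilde f\|_X=\|f\|_X$ is unnecessary, as you yourself note.
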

\begin{proof}
For $g \in X'$ we have
\[
\|\iota(g)\|_{X^\ast}=\sup_{\|f\|_X=1}\left|\int_\Omega\!fg\,\mathrm{d}\mu\right|\leq\sup_{\|f\|_X=1}\int_\Omega\!|fg|\,\mathrm{d}\mu=\|g\|_{X'},
\]
so it remains to show $\|g\|_{X'}\leq\|\iota(g)\|_{X^\ast}$.
Fix $f\in X$ and define $\widetilde{f}:=|fg|g^{-1}$ where $g$ is non-zero and zero elsewhere. Then $\widetilde{f}g=|fg|$ and $|\widetilde{f}|\leq|f|$ so that by the ideal property of $X$ we have $\widetilde{f}\in X$ with
\[
\int_\Omega\!|fg|\,\mathrm{d}x=|\iota(g)(\widetilde{f})|\leq\|\iota(g)\|_{X^\ast}\|\widetilde{f}\|_X\leq\|\iota(g)\|_{X^\ast}\|f\|_X.
\]
Taking a supremum over all $f\in X$ with $\|f\|_X=1$ proves the result.
\end{proof}

Next, we wish to determine when $X'$ is a Banach function space. We have already shown that $X'$ has the ideal property and is complete. Therefore,  to check that $X'$ is a Banach function space, it suffices to show that $X'$ has the saturation property. This, however, turns out to not always be the case. Indeed, for $X=L^p(\Omega)$ with $0<p<1$ we have $L^p(\Omega)'=\{0\}$, which is not saturated. Our goal is to characterize for which quasi-Banach function spaces $X$ the associate space $X'$ is a Banach function space.

When $X$ is a Banach function space, $X^*$ is non-trivial by the Hahn--Banach theorem. In fact, it turns out that in this case $X'$ is automatically saturated, and, hence, is a Banach function space:
\begin{theorem}\label{thm:dualofbfsisbfs}
Let $X$ be a Banach function space over $(\Omega,\mu)$. Then $X'$ is also a Banach function space over $(\Omega,\mu)$.
\end{theorem}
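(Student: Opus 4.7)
The ideal property and Fatou property of $X'$, and consequently its completeness by Proposition~\ref{prop:complete}, have already been verified in the discussion preceding the theorem, so only the saturation property of $X'$ remains to be established.

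Fix a measurable $E\subseteq\Omega$ with $\mu(E)>0$. By $\sigma$-finiteness together with the saturation of $X$ (Proposition~\ref{prop:weakorderunit}), I may refine to $F_0\subseteq E$ with $0<\mu(F_0)<\infty$ and $\ind_{F_0}\in X$; the task is then to find $F\subseteq F_0$ of positive measure with $\ind_F\in X'$. The plan is first to produce a non-negative $g\in X'$ with $\int_{F_0}g\dd\mu>0$, and then to extract the required indicator from it: for some $\epsilon>0$, the set $F:=\{x\in F_0:g(x)>\epsilon\}$ has positive measure, and the pointwise bound $\ind_F\leq\epsilon^{-1}g$ together with the ideal property of $X'$ yields $\ind_F\in X'$.

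To produce $g$, I would invoke the Hahn--Banach theorem --- available precisely because $X$ is a genuine Banach space, which is where the theorem fails for quasi-Banach function spaces such as $L^p$ with $0<p<1$. Applied to $\spn(\ind_{F_0})$, it yields $\varphi\in X^\ast$ with $\varphi(\ind_{F_0})>0$, and passing to the lattice positive part in the Banach lattice $X^\ast$ gives $\varphi\geq 0$. The set function $\nu(A):=\varphi(\ind_A)$ on measurable $A\subseteq F_0$ is then positive, finitely additive, bounded, and vanishes on $\mu$-null sets; its Yosida--Hewitt decomposition $\nu=\nu_c+\nu_s$ together with the Radon--Nikodym theorem applied to $\nu_c$ produces a non-negative $g\in L^1(F_0)$, extended by zero outside $F_0$. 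A standard monotone approximation of any $f\in X_+$ by non-negative simple functions supported in $F_0$ and bounded by $f$, together with the monotone convergence theorem, upgrades the inequality $\int s\,g\dd\mu\leq\varphi(s)\leq\|\varphi\|_{X^\ast}\|s\|_X$ valid for such simple $s$ into $\int_\Omega |f|g\dd\mu\leq\|\varphi\|_{X^\ast}\|f\|_X$ for all $f\in X$, so that $g\in X'$.

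The main obstacle is to ensure that $\nu_c$, and hence $g$, is non-zero, since an arbitrary Hahn--Banach extension of a functional on a one-dimensional subspace can be purely singular. This is where the lattice structure of $X$ plays an essential role beyond the mere Banach-space structure: the resolution is to extend not from $\spn(\ind_{F_0})$ but from the entire sublattice $L^\infty(F_0)\subseteq X$ (included by the ideal property), on which the positive functional $h\mapsto\int_{F_0}h\dd\mu$ is manifestly order-continuous, and to exploit the Dedekind completeness of $X^\ast$ together with a Kantorovich-type extension theorem for positive order-continuous linear functionals on Banach lattices in order to obtain an order-continuous extension $\varphi\in X^\ast_+$; for such $\varphi$ the associated $\nu$ is automatically countably additive, giving $\nu_c=\nu\neq 0$. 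Establishing the requisite $\|\cdot\|_X$-boundedness of this extension --- for which a closed-graph or Baire-category argument applied to the closed sets $\{f\in X:\int_{F_0}|f|\dd\mu\leq n\}$ is natural --- is the technical heart of the proof.
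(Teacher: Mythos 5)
Your reduction to the saturation of $X'$ is exactly what the paper does (it then simply cites \cite[Theorem~71.4(a)]{Za67}), and the outer machinery of your argument is sound: refining $E$ to $F_0$ with $\ind_{F_0}\in X$ and $0<\mu(F_0)<\infty$, extracting $\ind_F\in X'$ from a nonzero $0\leq g\in X'$ via a level set, and the Hahn--Banach/positive part/Yosida--Hewitt/Radon--Nikodym route to a candidate $g$. You also correctly identify the one point where all the difficulty sits: nothing prevents the extension $\varphi$ from being purely singular, i.e.\ $\nu_c=0$. The problem is that your proposed resolution of this point does not work, so the proof has a genuine gap at precisely the step you call its technical heart.

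Concretely: the functional $h\mapsto\int_{F_0}h\dd\mu$ on $L^\infty(F_0)$ need not admit \emph{any} bounded positive extension to $X$, order-continuous or not. If such an extension $\varphi$ existed, then for every $f\in X$ and every simple $0\leq s\leq \abs{f}\ind_{F_0}$ one would have $\int_{F_0}s\dd\mu=\varphi(s)\leq\nrm{\varphi}_{X^*}\nrm{f}_X$, whence $\int_{F_0}\abs{f}\dd\mu\leq\nrm{\varphi}_{X^*}\nrm{f}_X$, i.e.\ $\ind_{F_0}\in X'$. But the paper's own introduction recalls (via \cite[Example~3.3]{ST15b}) that there are Banach function spaces --- Morrey spaces --- containing sets $F_0$ of finite measure with $\ind_{F_0}\in X$ yet $\int_{F_0}\abs{f}\dd\mu=\infty$ for some $f\in X$; indeed, removing exactly this hypothesis \eqref{eq:illegaldual} is a main point of the paper. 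For the same reason your fallback Baire-category argument collapses: the closed sets $\{f\in X:\int_{F_0}\abs{f}\dd\mu\leq n\}$ need not cover $X$, so Baire's theorem yields nothing. There is also no off-the-shelf ``Kantorovich-type'' theorem producing order-continuous positive extensions from the principal ideal $L^\infty(F_0)$; the canonical candidate $\widetilde{\varphi}(f)=\sup\{\int_{F_0}h\dd\mu:0\leq h\leq \abs{f},\ h\in L^\infty(F_0)\}=\int_{F_0}\abs{f}\dd\mu$ is exactly the quantity that may be infinite. The conclusion one is after is only that \emph{some} further subset $F\subseteq F_0$ of positive measure satisfies $\ind_F\in X'$, and any correct argument (such as the exhaustion argument behind \cite[Theorem~71.4(a)]{Za67}) must have a mechanism for discarding part of $F_0$ before any integrability is available; your outline localizes all the shrinking to the final level-set step, which is too late.
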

\begin{proof}
By the above discussion, we need only prove that $X'$ satisfies the saturation property. This follows from \cite[Theorem~71.4(a)]{Za67}
\end{proof}

This result allows us to prove the following characterization of when $X'$ is a Banach function space for a quasi-Banach function space $X$:
\begin{theorem}\label{thm:saturateddual}
Let $X$ be a quasi-Banach function space over $(\Omega,\mu)$. Then the following are equivalent:
\begin{enumerate}[(i)]
    \item\label{it:dualsaturated1} $X'$ is a Banach function space over $(\Omega,\mu)$;
    \item\label{it:dualsaturated2} There is a Banach function space $E$ over $(\Omega,\mu)$ such that $X\hookrightarrow E$.
\end{enumerate}
\end{theorem}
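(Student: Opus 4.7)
The plan is to prove the two directions separately, with Theorem \ref{thm:dualofbfsisbfs} doing the essential work in both.

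For \ref{it:dualsaturated1} $\Rightarrow$ \ref{it:dualsaturated2}, I would take $E := X'' = (X')'$. Since $X'$ is a Banach function space by assumption, Theorem \ref{thm:dualofbfsisbfs} applied to $X'$ gives that $X''$ is also a Banach function space over $(\Omega,\mu)$. It remains to verify that $X \hookrightarrow X''$. For $f \in X$, the defining property of $X'$ guarantees that $fg \in L^1(\Omega)$ for every $g \in X'$, so $f \in X''$. The norm inequality $\|f\|_{X''} \leq \|f\|_X$ follows from the elementary bound $\|fg\|_{L^1(\Omega)} \leq \|f\|_X \|g\|_{X'}$, which is an immediate consequence of the definition of $\|g\|_{X'}$ and the homogeneity of the quasi-norm.

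For \ref{it:dualsaturated2} $\Rightarrow$ \ref{it:dualsaturated1}, I would use the observation already made in the discussion preceding the statement: for any quasi-Banach function space $X$, the space $X'$ always satisfies the ideal and Fatou properties and is therefore complete. Hence the only task is to verify that $X'$ is saturated. Given a continuous embedding $X \hookrightarrow E$ with $E$ a Banach function space, I would first note that this dualizes to a containment $E' \subseteq X'$: for $g \in E'$ and $f \in X \subseteq E$, we have $\|fg\|_{L^1(\Omega)} \leq \|f\|_E \|g\|_{E'} < \infty$, so indeed $g \in X'$. By Theorem \ref{thm:dualofbfsisbfs}, $E'$ is itself a Banach function space, and in particular saturated. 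Proposition \ref{prop:weakorderunit} then supplies a weak order unit $u \in E'$ with $u > 0$ a.e., and this same $u$ lies in $X'$, where it is again a weak order unit. A second application of Proposition \ref{prop:weakorderunit} yields that $X'$ is saturated.

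Neither direction poses a serious obstacle; both are essentially formal consequences of Theorem \ref{thm:dualofbfsisbfs} combined with the basic duality pairing $\|fg\|_{L^1(\Omega)} \leq \|f\|_X \|g\|_{X'}$. The most delicate point is the transfer of saturation in \ref{it:dualsaturated2} $\Rightarrow$ \ref{it:dualsaturated1}, where one exploits the contravariance of K\"othe duality (a larger ambient Banach function space $E$ produces a smaller dual $E' \subseteq X'$) to import a weak order unit from $E'$ into $X'$. In particular, saturation of $X'$ need not come from $X$ itself, but can be inherited from any Banach function space sitting above $X$.
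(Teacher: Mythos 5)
Your proof is correct and follows essentially the same route as the paper: take $E=X''$ for one direction and use the reverse inclusion $E'\subseteq X'$ together with Theorem~\ref{thm:dualofbfsisbfs} for the other. The only cosmetic difference is that for \ref{it:dualsaturated1}$\Rightarrow$\ref{it:dualsaturated2} the paper deduces saturation of $X''$ from the saturation of $X$ via the embedding $X\hookrightarrow X''$, whereas you obtain it by applying Theorem~\ref{thm:dualofbfsisbfs} to $X'$; both are valid.
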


\begin{proof}
To prove \ref{it:dualsaturated1}$\Rightarrow$\ref{it:dualsaturated2}, assume $X'$ is a Banach function space, so $X''$ is well-defined. Since $X\hookrightarrow X''$ with 
\[
\|f\|_{X''}=\sup_{\|g\|_{X'}=1}\|fg\|_{L^1(\Omega)}\leq\sup_{\|g\|_{X'}=1}\|f\|_X\|g\|_{X'}=\|f\|_X
\]
and $X$ has the saturation property, $X''$ has the saturation property as well. Thus, $X''$ is a Banach function space. Therefore \ref{it:dualsaturated2} is satisfied with $E=X''$.

For the converse, let $E$ be as in \ref{it:dualsaturated2} and let $C>0$ such that $\|f\|_E\leq C\|f\|_X$ for all $f\in X$. Then, by Theorem~\ref{thm:dualofbfsisbfs}, the space $E'$ has the saturation property. Since
\[
\|g\|_{X'}=\sup_{\|f\|_X\leq 1}\|fg\|_{L^1(\Omega)}\leq \sup_{\|f\|_E\leq C}\|fg\|_{L^1(\Omega)}=C\|g\|_{E'}
\]
for all $g\in E'$, we have that $E'\subseteq X'$. This means that $X'$ has the saturation property as well, proving \ref{it:dualsaturated1}.
\end{proof}

\subsection{The Fatou property}\label{subs:Fatou}
The Fatou property is essentially an $X$-version of the monotone convergence theorem and Fatou's lemma from integration theory, and reduces back to these classical results in the case that $X = L^1(\Omega)$. 

\begin{definition}
Let $X$ be a quasi-Banach function space over $(\Omega,\mu)$. We say that $X$ satisfies the \emph{Fatou property} if it satisfies the following condition: if $0\leq f_n \uparrow f$ for $(f_n)_{n\geq 1}$ in $X$ and $\sup_{n\geq 1}\nrm{f_n}_X<\infty$, then $f \in X$ and $\nrm{f}_X=\sup_{n\geq 1}\nrm{f_n}_X$.
\end{definition}

The Fatou property is equivalent to an $X$-version of Fatou's lemma, which explains the nomenclature.
\begin{lemma}\label{lem:Fatou}
Let $X$ be a quasi-Banach function space over $(\Omega,\mu)$. Then $X$ has the Fatou property if and only if for any sequence of positive-valued functions  $(f_n)_{n\geq 1}$ in $X$ with $\liminf_{n \to \infty}\nrm{f_n}_X <\infty$, one has $\liminf_{n \to \infty} f_n \in X$ and
$$
\nrmb{\liminf_{n \to \infty} f_n}_X \leq \liminf_{n \to \infty}\,\nrm{f_n}_X.
$$
\end{lemma}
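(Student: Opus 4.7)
The plan is to prove both implications directly from the definitions, with the main work being the forward direction, where the classical Fatou's lemma trick of passing to $g_n := \inf_{k \geq n} f_k$ needs to be carried out inside $X$.

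For the forward implication, I would assume $X$ has the Fatou property and take a sequence $(f_n)_{n \geq 1}$ of positive-valued functions with $L := \liminf_{n \to \infty} \|f_n\|_X < \infty$. Define $g_n := \inf_{k \geq n} f_k \in L^0(\Omega)_+$. Since $g_n \leq f_n$ pointwise and $f_n \in X$, the ideal property gives $g_n \in X$ with $\|g_n\|_X \leq \inf_{k \geq n} \|f_k\|_X$. Moreover, $0 \leq g_n \uparrow \liminf_{n \to \infty} f_n$ pointwise a.e., and
\[
\sup_{n \geq 1} \|g_n\|_X \leq \sup_{n \geq 1} \inf_{k \geq n} \|f_k\|_X = \liminf_{n \to \infty} \|f_n\|_X = L < \infty.
\]
Applying the Fatou property to $(g_n)_{n\geq 1}$ yields $\liminf_{n \to \infty} f_n \in X$ with $\|\liminf_{n \to \infty} f_n\|_X = \sup_{n \geq 1} \|g_n\|_X \leq L$.

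For the reverse implication, suppose the $X$-version of Fatou's lemma holds, and let $0 \leq f_n \uparrow f$ with $M := \sup_{n \geq 1} \|f_n\|_X < \infty$. Since the sequence is increasing, $\liminf_{n \to \infty} f_n = f$ pointwise and $\liminf_{n \to \infty} \|f_n\|_X = M$ (monotonicity of $\|f_n\|_X$ coming from the ideal property and $f_n \leq f_{n+1}$). The assumed inequality immediately gives $f \in X$ and $\|f\|_X \leq M$. The reverse bound $\|f\|_X \geq \|f_n\|_X$ for every $n$ is another application of the ideal property using $f_n \leq f$, so $\|f\|_X = M$, establishing the Fatou property.

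The argument is essentially routine; the only point requiring a moment of thought is verifying that $g_n := \inf_{k \geq n} f_k$ is a well-defined element of $X$ (handled by the ideal property applied to $g_n \leq f_n$) and that the monotone convergence $g_n \uparrow \liminf_{n \to \infty} f_n$ takes place pointwise a.e., which is a standard measure-theoretic fact. No use of duality, the Riesz--Fischer property, or saturation is needed here.
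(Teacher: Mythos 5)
Your proof is correct and follows essentially the same route as the paper: the forward direction via $g_n := \inf_{k\geq n} f_k$, the ideal property, and the Fatou property applied to the increasing sequence $(g_n)_{n\geq 1}$ is exactly the paper's argument. The paper merely omits the reverse implication as immediate, which you have written out correctly.
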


\begin{proof} We only need to prove the forward implication, for which we
 define $g_n  = \inf_{k\geq n} f_k$ for $n \geq 1$. Then, by the ideal property, we have $g_n \in X$ and for all $m\geq n$ we have
$
\nrm{g_n}_X \leq \nrm{f_m}_X.
$
In particular,
$$
\nrm{g_n}_X \leq \inf_{m \geq n} \,\nrm{f_m}_X.
$$
Since $0\leq g_n \uparrow \liminf_{m\to \infty} f_m$, it follows from the Fatou property that $\liminf_{m \to \infty} f_m \in X$ and
$$
\nrmb{\liminf_{m \to \infty} f_m}_X = \nrmb{\lim_{n \to \infty} g_n}_X =\sup_{n\geq 1} \,\nrm{g_n}_X \leq \sup_{n\geq 1}\inf_{m \geq n} \nrm{f_m}_X = \liminf_{n \to \infty}\,\nrm{f_n}_X.
$$
This finishes the proof.
\end{proof}

When $X$ is a quasi-Banach function space, it follows from the monotone convergence theorem that $X'$ satisfies the Fatou property. This proves one direction of the so-called Lorentz--Luxemburg theorem:
\begin{theorem}\label{thm:lorentzluxemburg}
    Let $X$ be a Banach function space over $(\Omega,\mu)$. Then $X$ satisfies the Fatou property if and only if we have $X''=X$ with equal norm.
\end{theorem}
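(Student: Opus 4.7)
The easy direction---that $X = X''$ with equal norms implies the Fatou property---is immediate from the paragraph preceding Proposition~\ref{prop:kothedualembeddingdual}, which shows that the K\"othe dual of any quasi-Banach function space satisfies the Fatou property. Taking $X'$ in place of $X$ there gives that $X'' = (X')'$ has the Fatou property, and equal norms transport this to $X$.

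Assume henceforth that $X$ has the Fatou property. From the computation in the proof of Theorem~\ref{thm:saturateddual} we already have the continuous inclusion $X \hookrightarrow X''$ with
\[
\|f\|_{X''} \leq \|f\|_X, \qquad f \in X,
\]
so the plan is to promote this to an isometric isomorphism in two steps.

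\emph{Step 1} (isometric embedding, $\|f\|_X \leq \|f\|_{X''}$ on $X$). Fix $f \in X$ with $f \geq 0$ and $\|f\|_X > 0$. Apply the Hahn--Banach theorem in $X$ to obtain $\phi \in X^\ast$ with $\|\phi\|_{X^\ast} = 1$ and $\phi(f) = \|f\|_X$. Since $X$ is a Banach lattice, $X^\ast$ is a Dedekind complete Banach lattice, and replacing $\phi$ with $|\phi|$ we may assume $\phi \geq 0$ without changing the norm or the value $\phi(f)$. The Yosida--Hewitt decomposition yields $\phi = \phi_{\mathrm{oc}} + \phi_s$, where $\phi_{\mathrm{oc}}$ is order continuous and $\phi_s$ is singular; the order-continuous part is represented by integration against some $g \in X'$ with $\|g\|_{X'} \leq \|\phi\|_{X^\ast} = 1$. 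The Fatou property of $X$ enters here to ensure that the order-continuous positive functionals form a norming subset of $X^\ast$, equivalently that $\phi$ can be chosen with $\phi_s(f) = 0$. This gives $\|f\|_X = \phi(f) = \int_\Omega f g \,\dd\mu \leq \|f\|_{X''}$.

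\emph{Step 2} ($X'' \subseteq X$). Fix $f \in X''$ with $f \geq 0$. Using the saturation property of $X$ (Proposition~\ref{prop:weakorderunit}\ref{it:propsat2}), choose an increasing sequence of measurable sets $E_n \uparrow \Omega$ with $\ind_{E_n} \in X$, and set $f_n := \min(f,\, n \ind_{E_n})$. Then $f_n \leq n \ind_{E_n} \in X$, so the ideal property gives $f_n \in X$, and $f_n \uparrow f$ a.e. By Step 1 applied to $f_n \in X$ and the ideal property of $X''$,
\[
\|f_n\|_X = \|f_n\|_{X''} \leq \|f\|_{X''} < \infty.
\]
The Fatou property of $X$ then forces $f \in X$ with $\|f\|_X = \sup_n \|f_n\|_X \leq \|f\|_{X''}$, and combined with the reverse inequality this yields $\|f\|_X = \|f\|_{X''}$.

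The main obstacle is Step 1, specifically the assertion that the Fatou property forces the order-continuous positive functionals to be norming in $X^\ast$---equivalently, that the singular part in the Yosida--Hewitt decomposition of the Hahn--Banach functional can be arranged to vanish on $f$. This is the substantive Banach lattice-theoretic input; alternatively, one can cite it directly from \cite{Za67}, as the authors do for the related Theorem~\ref{thm:dualofbfsisbfs}, sidestepping the need to develop the theory of normal integrals from scratch. Once Step 1 is available, Step 2 is a routine monotone approximation.
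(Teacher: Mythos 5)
Your architecture is reasonable and parts of it match the paper exactly: the easy direction (Köthe duals always have the Fatou property, so $X=X''$ isometrically forces Fatou on $X$) is precisely the paper's observation, and your Step 2 is a correct, routine monotone approximation (modulo passing to $\abs{f}$ for general $f\in X''$ and noting that $X''$ is well defined because $X'$ is saturated by Theorem~\ref{thm:dualofbfsisbfs}). The paper itself gives no proof of the hard direction beyond citing \cite[Theorem~71.1]{Za67}, so you are not competing with a written argument; but judged as a standalone proof, your Step 1 has a genuine gap. The sentence ``the Fatou property of $X$ enters here to ensure that the order-continuous positive functionals form a norming subset of $X^\ast$'' is not an input you can feed into the Yosida--Hewitt decomposition: since the order-continuous functionals on a Banach function space are exactly the integral functionals $\iota(g)$, $g\in X'$, the claim that they are norming \emph{is} the inequality $\nrm{f}_X\leq\nrm{f}_{X''}$, restated in lattice language. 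The decomposition $\phi=\phi_{\mathrm{oc}}+\phi_s$ by itself gives no control on $\phi_s(f)$, and no mechanism is offered by which the Fatou property (a condition on $X$, not on $X^\ast$) suppresses the singular part. Worse, the version you assert --- that the Hahn--Banach functional can be chosen with $\phi_s(f)=0$ --- demands that the supremum $\sup\{\int_\Omega fg\dd\mu:\nrm{g}_{X'}\leq 1\}$ be \emph{attained} by some $g\in X'$, which is strictly stronger than the norming property and can fail even in spaces where the theorem holds. So the entire content of the theorem is concentrated in an unproven (and slightly overclaimed) assertion.

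For the record, the argument you are missing does not pass through $X^\ast$ at all. One fixes $0\leq f$ with $\nrm{f}_{X''}<1$, uses a weak order unit and Proposition~\ref{prop:weakorderunit}\ref{it:propsat2} to reduce (exactly as in your Step 2, via the Fatou property) to the case where $f$ is bounded and supported on a set $E$ with $\ind_E\in X\cap X'$, and then shows that the trace of the closed unit ball $B_X$ on $L^1(E)$ is convex and \emph{norm closed in $L^1(E)$} --- this is where the Fatou property is genuinely used, in combination with the continuity of the embedding $X\hookrightarrow L^0(\Omega)$ from Proposition~\ref{prop:convinmeasure}. If $f\notin B_X$, the Hahn--Banach separation theorem in the ambient space $L^1(E)$ produces an $h\in L^\infty(E)$ separating $f$ from $B_X\cap L^1(E)$; normalizing $h$ by $\sup_{g\in B_X}\int_E \abs{gh}\dd\mu$ yields an element of $X'$ of norm at most $1$ that pairs with $f$ to a value exceeding $1$, contradicting $\nrm{f}_{X''}<1$. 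This simultaneously yields your Steps 1 and 2. If you prefer to keep your two-step structure and cite Step 1 from \cite{Za67}, that is legitimate (and no less complete than the paper), but you should then present Step 1 as an imported black box rather than dress it in a Yosida--Hewitt argument that does not actually do any work.
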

For a full proof of this result we refer the reader to \cite[Theorem~71.1]{Za67}. In particular this result implies that for a Banach function space $X$, its K\"othe dual $X'$ is norming, i.e., we have
\[
\|f\|_{X}=\sup_{\|g\|_{X'}=1}\|fg\|_{L^1(\Omega)}.
\]
We point out that this result means that the Fatou property is equivalent to reflexivity in terms of K\"othe duality.

Even if a quasi-Banach function space does not have the Fatou property, it always embeds into one that does. This explains why one often assumes the Fatou property as part of the definition of a quasi-Banach function space.

\begin{proposition}\label{prop:fatouembedding}
Let $X$ be a quasi-Banach function space over $(\Omega,\mu)$. Then there is a quasi-Banach function space $Y$ over $(\Omega,\mu)$ that satisfies the Fatou property for which $X\hookrightarrow Y$ with $\|f\|_Y\leq\|f\|_X$ for all $f\in X$.
\end{proposition}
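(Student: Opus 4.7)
The plan is to construct $Y$ as the ``$\mu$-a.e.\ envelope'' of $X$. I would set
\[
\|f\|_Y := \inf\!\left\{\sup_{n\geq 1} \|f_n\|_X : (f_n)_{n\geq 1} \subseteq X_+,\ f_n \to |f|\ \mu\text{-a.e.}\right\}
\]
(with $\inf\emptyset := \infty$) and $Y := \{f \in L^0(\Omega) : \|f\|_Y < \infty\}$. Taking the constant sequence $f_n = |f|$ immediately yields $\|f\|_Y \leq \|f\|_X$ for $f \in X$, so the required contractive inclusion $X \hookrightarrow Y$ is built into the definition.

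The algebraic and order-theoretic part is fairly routine. I would verify that $\|\cdot\|_Y$ is a quasi-norm on $Y$ with $K_Y \leq K_X$ by, given approximating sequences $f_n^{(i)} \to |f_i|$ for $i=1,2$, using the truncated sum $(f_n^{(1)} + f_n^{(2)}) \wedge |f_1 + f_2|$ as an approximating sequence for $|f_1+f_2|$. The implication $\|f\|_Y = 0 \Rightarrow f = 0$ a.e.\ is obtained by picking, for each $m \geq 1$, an approximating sequence for $|f|$ with $X$-norms bounded by $1/m$ and extracting a diagonal that converges simultaneously, locally in measure, to both $|f|$ and $0$; the Hausdorff property of local convergence in measure (Proposition~\ref{prop:convinmeasure}) then forces $f = 0$. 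The ideal property of $Y$ follows from the $\wedge$-trick $g_n := f_n \wedge |g|$ applied to any approximating sequence $(f_n)$ for $f$, and the saturation property is inherited from $X$ through $X \hookrightarrow Y$.

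The heart of the proof is the Fatou property. Given $0 \leq h_n \uparrow h$ with $\sup_n \|h_n\|_Y \leq M$, I would fix $\varepsilon > 0$ and, for each $n$, select $(f_{n,k})_k \subseteq X_+$ with $f_{n,k} \to h_n$ $\mu$-a.e.\ and $\sup_k \|f_{n,k}\|_X \leq M + \varepsilon \cdot 2^{-n}$. Writing $\Omega = \bigcup_m A_m$ with $A_m$ increasing of finite measure, a.e.\ convergence of $f_{n,k} \to h_n$ on the finite-measure set $A_n$ lets me pick $k(n)$ such that
\[
E_n := \{x \in A_n : |f_{n,k(n)}(x) - h_n(x)| \geq 2^{-n}\}
\]
has $\mu(E_n) < 2^{-n}$. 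Setting $F_n := f_{n,k(n)}$, the Borel--Cantelli lemma applied to $(E_n)$, together with $h_n \to h$ a.e., yields $F_n \to h$ $\mu$-a.e., while $\sup_n \|F_n\|_X \leq M + \varepsilon$. Thus $\|h\|_Y \leq M + \varepsilon$, and letting $\varepsilon \to 0$ gives Fatou. Completeness of $Y$ then follows from the Fatou property via the Riesz--Fischer property (Remark~\ref{rem:def}\ref{it:def2}).

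The main obstacle is precisely this diagonal extraction. A naive monotone diagonal like $\max_{j \leq n} f_{j,n}$ would see its $X$-norm grow with $n$ (even after passing to the Aoki--Rolewicz $p$-norm), so one must genuinely exploit that $\|\cdot\|_Y$ is defined in terms of \emph{arbitrary} a.e.-convergent approximants, not only monotone ones, in order that an Egorov/Borel--Cantelli argument can produce a single diagonal sequence achieving both the correct a.e.\ limit and the norm bound $M + \varepsilon$ simultaneously.
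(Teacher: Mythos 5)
Your construction is correct and is, in substance, the same as the paper's: both build the Lorentz envelope of $X$, and the paper then cites \cite[Section~66]{Za67} for the verification that you carry out by hand. The only real difference is that the paper defines $\|f\|_Y$ using monotone approximating sequences $0\le f_n\uparrow|f|$, whereas you allow arbitrary a.e.-convergent sequences in $X_+$; these two definitions in fact give the same space and the same quasi-norm, since given $f_n\to|f|$ a.e.\ with $\sup_n\|f_n\|_X\le M$, the functions $g_n:=\inf_{k\ge n}f_k$ lie in $X$ by the ideal property, satisfy $\|g_n\|_X\le M$, and increase to $|f|$ a.e. For this reason your closing paragraph slightly overstates the situation: the monotone definition is not a genuine obstacle, because one can always run your Egorov/Borel--Cantelli diagonal to produce a single a.e.-convergent sequence with the right norm bound and then monotonize it by the $\inf_{k\ge n}$ trick, whose $X$-norm does not grow (unlike that of $\max_{j\le n}f_{j,n}$). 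With that said, your verification of the individual properties (quasi-triangle inequality via the truncation $(f_n^{(1)}+f_n^{(2)})\wedge|f_1+f_2|$, definiteness and the Fatou property via selection of a diagonal converging locally in measure, saturation inherited from $X\hookrightarrow Y$, and completeness from the Fatou property via the Riesz--Fischer property) is sound and supplies exactly the details the paper leaves to the reference.
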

According to Zaanen (see \cite[Section~66]{Za67}), the following construction was originally introduced by G.~G.~Lorentz in an unpublished work.
\begin{proof}
We let $Y\subseteq L^0(\Omega)$ denote the space of $f\in L^0(\Omega)$ for which there exists a sequence $(f_n)_{n\geq 1}$ in $X$ for which $0\leq f_n\uparrow |f|$ a.e. and $\sup_{n\geq 1}\|f_n\|_X<\infty$. We equip this space with the Lorentz quasi-norm
\[
\|f\|_Y:= \inf\cbraceb{ \sup_{n\geq 1}\,\|f_n\|_X : 0\leq f_n \uparrow \abs{f}}.
\]
For $f\in X$ we can set $f_n:=|f|$ for $n\geq 1$, so we have $X\subseteq Y$ with $\|f\|_Y\leq\|f\|_X$.
 
In the case that $\nrm{\,\cdot\,}_X$ is a norm, the proof that $\nrm{\,\cdot\,}_Y$ is also a norm satisfying the Fatou property can be found in \cite[Section~66]{Za67}. These proofs remain valid, mutatis mutandis, when $\nrm{\,\cdot\,}_X$ is a quasi-norm.
\end{proof}
\begin{remark}
When $X'$ is saturated, one can note that $X''$ is a Banach function space with the Fatou property that $X$ embeds into. As a matter of fact, it is shown in \cite[Theorem~71.2]{Za67} that when $X$ is a Banach function space, then the $Y$ constructed in the above proof is equal to $X''$. Remarkably, the above construction remains valid even when $X'$ is not saturated (in which case $\|\cdot\|_{X''}$ would only be a seminorm).
\end{remark}
\begin{remark}
    Sometimes one only has a weaker version of the Fatou property:
    \begin{itemize}
\item \textit{Weak Fatou property:} There is a $C>0$ such that if $0\leq f_n \uparrow f$ for $(f_n)_{n\geq 1}$ in $X$ and $\sup_{n\geq 1}\nrm{f_n}_X<\infty$, then $f \in X$ and $\nrm{f}_X\leq C\sup_{n\geq 1}\nrm{f_n}_X$.
\end{itemize}
    For a quasi-Banach function space with the weak Fatou property,    one actually has $Y=X$ isomorphically in Proposition \ref{prop:fatouembedding}. A typical example where one runs into the weak Fatou property, is when one passes to an equivalent quasi-norm on a space with the Fatou property. For example, this happens when one equips a quasi-Banach function space with the Aoki-Rolewicz $p$-norm as defined in \eqref{eq:Aoki}. If one then wants to retain the Fatou property, one can then apply the construction in Proposition \ref{prop:fatouembedding} on this $p$-norm and check that this is again a $p$-norm, this time with the Fatou property.
\end{remark}

Finally, we show that quasi-Banach function spaces with the Fatou property are, in some sense, maximal. In particular, we show that if a quasi-Banach function space $X$ isometrically embeds as a proper subspace into a quasi-Banach function space $Y$, then $X$ cannot have the Fatou property. For example, this means that $c_0$ does not have the Fatou property, as it is a proper closed subspace of the Banach function space $\ell^\infty$.

\begin{proposition}\label{prop:noproperclosedsubspace}
    Suppose  $X$ and $Y$ are quasi-Banach function spaces over $(\Omega,\mu)$, $X \subseteq Y$ and $\nrm{f}_X=\nrm{f}_Y$ for all $f \in X$. If $X$ has the Fatou property, then $X=Y$.
\end{proposition}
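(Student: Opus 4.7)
The plan is to show that every $f \in Y$ lies in $X$ by constructing, for each such $f$, a sequence in $X$ that increases to $\abs{f}$ and whose $X$-norms stay bounded, and then to invoke the Fatou property of $X$.

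First I would reduce to the case $f \geq 0$: by the ideal property of $Y$ we have $\abs{f} \in Y$, and once we know $\abs{f} \in X$ the ideal property of $X$ yields $f \in X$. So fix $f \in Y$ with $f \geq 0$.

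Next I would invoke the saturation property of $X$ to produce a weak order unit $u \in X$ with $u > 0$ a.e., using Proposition \ref{prop:weakorderunit}. The key construction is then
\[
f_n := \min(f, n u), \qquad n \geq 1.
\]
Since $0 \leq f_n \leq n u$ and $n u \in X$, the ideal property of $X$ gives $f_n \in X$. Because $u > 0$ a.e., we have $n u \uparrow \infty$ a.e., so $f_n \uparrow f$ a.e. Moreover, since $f_n \leq f \in Y$, the ideal property of $Y$ together with the assumption that the norms agree on $X$ yields
\[
\nrm{f_n}_X = \nrm{f_n}_Y \leq \nrm{f}_Y < \infty,
\]
hence $\sup_{n\geq 1} \nrm{f_n}_X \leq \nrm{f}_Y < \infty$.

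Finally, the Fatou property of $X$ applied to $(f_n)_{n\geq 1}$ gives $f \in X$ (with $\nrm{f}_X \leq \nrm{f}_Y$). This shows $Y \subseteq X$, and combined with the hypothesis $X \subseteq Y$ we conclude $X = Y$. I do not foresee a serious obstacle: the only ingredients are the ideal properties of $X$ and $Y$, the existence of a weak order unit (equivalent to saturation by Proposition \ref{prop:weakorderunit}), the isometric inclusion $X \hookrightarrow Y$, and the Fatou property of $X$. The one subtlety worth double-checking is that the truncations $f \wedge (nu)$ indeed increase a.e.\ to $f$, which is immediate from $u > 0$ a.e.
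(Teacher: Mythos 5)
Your proposal is correct and follows essentially the same argument as the paper: truncating $\abs{f}$ by multiples of a weak order unit $u$, using the ideal properties of $X$ and $Y$ together with the isometric inclusion to bound the norms, and concluding with the Fatou property. The only cosmetic difference is that the paper works directly with $\min(\abs{f},nu)$ rather than first reducing to $f\geq 0$.
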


\begin{proof}
Let $f \in Y$, let $0<u \in X$ be a weak order unit, and define $g_n := \min(\abs{f},nu )$ for $n \geq 1$. Then, by the ideal property of $X$, we have $g_n \in X$ for all $n \geq 1$. Moreover, $g_n \uparrow \abs{f}$ a.e. and, by the ideal property of $Y$,
\[
\sup_{n\geq 1}\|g_n\|_X=\sup_{n\geq 1}\|g_n\|_Y\leq \|f\|_Y.
\]
Hence, $\abs{f} \in X$ by the Fatou property of $X$ so that $f \in X$ by the ideal property of $X$.
\end{proof}

\subsection{Order continuity} Having dealt with $X$-valued versions of the monotone convergence theorem and Fatou's lemma through the Fatou property, we now wish to discuss the third main convergence theorem of integration theory: the dominated convergence theorem. To make sense of this theorem in a quasi-Banach function space setting, we introduce the notion of order convergence. We say that a sequence $(f_n)_{n\geq 1}$ in $X$ \emph{order converges} to $f \in X$ if there is a sequence $(g_n)_{n\geq 1}$ in $X$ such that $g_n \downarrow 0$ and $\abs{f-f_n} \leq g_n$ for all $n\geq 1$. Using this terminology, the dominated convergence theorem is equivalent to the statement that order convergence implies norm convergence for $X=L^1(\Omega)$, which can be seen by taking $g_n = \sup_{k\geq n} \abs{f-f_k}$ for $n \geq 1$.

Not all quasi-Banach function spaces have the property that order convergence implies norm convergence. Indeed, if $X = L^\infty(\Omega)$, order convergence corresponds to pointwise a.e. convergence for a bounded sequence of functions, whereas norm convergence corresponds to uniform a.e. convergence. This motivates the following definition.

\begin{definition}
 A quasi-Banach function space $X$ over $(\Omega,\mu)$ is called \emph{order-continuous} if for sequences $(f_n)_{n\geq 1}$ in $X$ with $f_n\downarrow 0$ pointwise a.e. we have $\nrm{f_n}_X\downarrow 0$.
\end{definition}
We note that in an order-continuous quasi-Banach function space, order convergence implies norm convergence, which explains the nomenclature. Rephrasing, a quasi-Banach function space $X$ is order-continuous if and only if an $X$-version of the dominated convergence theorem holds, i.e. for any sequence $(f_n)_{n\geq 1}$ in $X$ such that $f_n \to f$ pointwise a.e. and $\abs{f_n} \leq g \in X$ for all $n \geq 1$, it follows that 
$$\lim_{n \to \infty}\nrm{f_n - f}_X = 0.$$

As already noted, $L^\infty(\Omega)$ is not order-continuous. In particular, the sequence space $\ell^\infty$ is not order-continuous. This space is actually the prototypical space that is not order-continuous in the following sense: Any quasi-Banach function space that is not order continuous contains a (lattice) isomorphic copy of $\ell^\infty$. For Banach lattices, this can be found in \cite{MN73} (see also \cite[Theorem 1.a.7]{LT79}), which can be adapted to the quasi-Banach function space setting using the Aoki--Rolewicz theorem.

\bigskip

Various authors use  a different, but equivalent notion instead of order-continuity. A quasi-Banach function space $X$ is said to have \emph{absolutely continuous quasi-norm} if, for all $f \in X$ and for all decreasing sequences of measurable sets $(E_n)_{n\geq 1}$ with $\ind_{E_n}\downarrow 0$ a.e., we have $\nrm{f\ind_{E_n}}_X\downarrow 0$.

\begin{proposition}\label{prop:abscontordercont}
Let $X$ be a quasi-Banach function space over $(\Omega,\mu)$. Then $X$ is order-continuous if and only if $X$ has an absolutely continuous quasi-norm.
\end{proposition}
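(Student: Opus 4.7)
The plan is to prove the two implications separately.

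For the forward direction, suppose $X$ is order-continuous. Given $f\in X$ and a decreasing sequence of measurable sets $(E_n)_{n\geq 1}$ with $\ind_{E_n}\downarrow 0$ a.e., the sequence $f_n:=\abs{f}\ind_{E_n}$ lies in $X$ by the ideal property, is pointwise decreasing, and satisfies $f_n\downarrow 0$ a.e. Order-continuity then yields $\nrm{f\ind_{E_n}}_X=\nrm{f_n}_X\downarrow 0$, establishing absolute continuity.

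For the converse, suppose that $X$ has an absolutely continuous quasi-norm and let $(f_n)_{n\geq 1}$ be a sequence in $X$ with $f_n\downarrow 0$ a.e. Setting $f:=f_1$, the strategy is to split each $f_n$ into a part of quasi-norm controlled by $\varepsilon$ and a part supported on a shrinking set to which absolute continuity can be applied. Specifically, for fixed $\varepsilon>0$, I would define
\[
B_n:=\{x\in\Omega:f_n(x)>\varepsilon f(x)\}, \qquad n\geq 1.
\]
Distinguishing the cases $x\in B_n$ and $x\notin B_n$ separately yields the pointwise estimate $f_n\leq \varepsilon f+f\ind_{B_n}$.

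The crucial step is to verify that $(B_n)_{n\geq 1}$ is a decreasing sequence of measurable sets with $\ind_{B_n}\downarrow 0$ a.e. Monotonicity is immediate from $f_{n+1}\leq f_n$. For the pointwise limit, the relation $0\leq f_n\leq f$ forces $B_n\subseteq\{f>0\}$, and on this set the ratio $f_n/f$ is well-defined and tends to $0$ pointwise a.e., whence $\ind_{B_n}\downarrow 0$ a.e. Absolute continuity of the quasi-norm then gives $\nrm{f\ind_{B_n}}_X\downarrow 0$.

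Combining the pointwise estimate with the ideal property and the quasi-triangle inequality, I would deduce
\[
\nrm{f_n}_X\leq K_X\bigl(\varepsilon\nrm{f}_X+\nrm{f\ind_{B_n}}_X\bigr),
\]
so that $\limsup_{n\to\infty}\nrm{f_n}_X\leq K_X\varepsilon\nrm{f}_X$. Letting $\varepsilon\downarrow 0$ yields $\nrm{f_n}_X\to 0$, which is the desired order-continuity. The main subtlety lies in the choice of the shrinking set $B_n$: comparing $f_n$ to $\varepsilon f$ rather than to the constant $\varepsilon$ ensures that $f\ind_{B_n}\in X$ by the ideal property, so that absolute continuity can be applied directly, whereas the naive indicator $\ind_{\{f_n>\varepsilon\}}$ need not lie in $X$.
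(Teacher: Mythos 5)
Your proof is correct. The forward direction is exactly the paper's. For the converse, your argument follows the same template as the paper's: compare $f_n$ pointwise to a small multiple of a fixed dominating function, isolate the exceptional set where this fails, observe that these sets shrink to a null set, and invoke absolute continuity there. The one genuine difference is the choice of dominating function: the paper compares $f_n$ to a weak order unit $u\in X$ (whose existence requires the saturation property via Proposition~\ref{prop:weakorderunit}), whereas you compare $f_n$ to $f_1$ itself. Your choice is slightly more economical, since it makes the converse implication independent of the saturation property and works verbatim for any ideal quasi-Banach lattice of functions; the only point that needs the care you gave it is that $B_n\subseteq\{f_1>0\}$, so that the ratio $f_n/f_1$ is meaningful and tends to $0$ a.e.\ there. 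One last cosmetic remark: order-continuity asks for $\nrm{f_n}_X\downarrow 0$, but monotonicity of the norms is automatic from the ideal property, so your conclusion $\nrm{f_n}_X\to 0$ suffices.
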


\begin{proof}
It is clear that order-continuity implies that $X$ has an absolutely continuous quasi-norm by taking $f_n = |f|\ind_{E_n}$.  For the converse, let $(f_n)_{n\geq 1}$ be a sequence in $X$ with $f_n \downarrow 0$ pointwise a.e. Take $\varepsilon>0$, let $u\in X$ be a weak order unit, and
define
\[
E_n:=\cbraceb{x\in \Omega:f_n(x)u(x)^{-1}>(2K_X\nrm{u}_X)^{-1}\varepsilon }.
\]
Since $f_n u^{-1}\downarrow 0$ pointwise a.e., we know that $E_n$ decreases to a set of measure zero. By the absolute continuity of the quasi-norm of $X$, we can find an $N\geq 1$ such that
\begin{equation*}
\|f_1\ind_{E_N}\|_X<\frac{\varepsilon}{2K_X}.
\end{equation*}
By the ideal property, this implies for all $n \geq N$
\begin{align*}
\|f_n\|_X&\leq K_X\|f_n\ind_{\Omega\backslash E_N}\|_X+K_X\|f_n\ind_{E_N}\|_X\\
&\leq K_X \cdot  (2K_X\nrm{u}_X)^{-1}\varepsilon \cdot \|u \ind_{\Omega\backslash E_{N}}\|_X+K_X\|f_1\ind_{E_N}\|_X\\
&<\frac{\varepsilon}{2}+\frac{\varepsilon}{2}=\varepsilon.
\end{align*}
The assertion follows.
\end{proof}

We say that the measure space $(\Omega,\mu)$ is \emph{separable} if there is a countable collection of measurable sets $\mc{A}$ such that for every measurable set $E\subseteq \Omega$ with $\mu(E)<\infty$ and every $\varepsilon>0$ one can find an $A \in \mc{A}$ with $\mu(A\Delta E)<\varepsilon$. Note that, in particular, the Lebesgue measure on  $\R^d$ is separable.
For separable $(\Omega,\mu)$, the order-continuity of a quasi-Banach function space $X$ over $(\Omega,\mu)$ implies the separability of $X$. 

\begin{proposition}\label{prop:separable}
    Let $X$ be a quasi-Banach function space over a separable measure space $(\Omega,\mu)$. If $X$ is order-continuous, then $X$ is separable.
\end{proposition}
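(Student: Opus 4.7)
The plan is to construct a countable dense subset of $X$ consisting of rational linear combinations of indicators of sets from the countable family $\mc{A}$ provided by the separability of $(\Omega,\mu)$, intersected with a $\sigma$-finite exhaustion of $\Omega$.

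First, I would combine Proposition~\ref{prop:weakorderunit} with the $\sigma$-finiteness of $(\Omega,\mu)$ to obtain an increasing sequence $(H_n)_{n\geq 1}$ of measurable sets with $H_n\uparrow\Omega$, $\mu(H_n)<\infty$ and $\ind_{H_n}\in X$ for all $n\geq 1$. Splitting a given $f\in X$ into real and imaginary, positive and negative parts, it suffices to approximate non-negative $f\in X$. For such $f$, order-continuity applied to the decreasing sequences $f\ind_{\Omega\setminus H_n}\downarrow 0$ (as $n\to\infty$) and $(f-k)^+\ind_{H_n}\downarrow 0$ (as $k\to\infty$) reduces the task to approximating, in the $X$-quasi-norm, bounded non-negative functions of the form $g:=\min(f,k)\ind_{H_n}$.

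Next I would approximate such $g$ uniformly by simple functions $\sum_i q_i\ind_{E_i}$ with $q_i\in\Q$ and $E_i\subseteq H_n$ measurable, using the standard dyadic construction on the range; the resulting estimate $|g-s_m|\leq m^{-1}\ind_{H_n}$ gives $X$-convergence directly from the ideal property and $\ind_{H_n}\in X$, without invoking order-continuity. The final ingredient is to replace each $E_i\subseteq H_n$ by $A_i\cap H_n$ for some $A_i\in\mc{A}$: since $\mu(E_i)<\infty$, separability of $(\Omega,\mu)$ produces $A_i\in\mc{A}$ with $\mu(A_i\triangle E_i)$ arbitrarily small, and
\[
|\ind_{E_i}-\ind_{A_i\cap H_n}|\leq\ind_{(A_i\cap H_n)\triangle E_i}\leq\ind_{H_n\cap(A_i\triangle E_i)},
\]
which is the indicator of a subset of $H_n$ of small $\mu$-measure.

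The main obstacle is then to show that $F\mapsto \nrm{\ind_F}_X$ is absolutely continuous with respect to $\mu$ on each $H_n$, i.e.\ that for every $\varepsilon>0$ there exists $\delta>0$ with $\nrm{\ind_F}_X<\varepsilon$ whenever $F\subseteq H_n$ and $\mu(F)<\delta$. Since Proposition~\ref{prop:abscontordercont} only gives absolute continuity along decreasing sequences of sets, I would argue by contradiction: given $F_k\subseteq H_n$ with $\mu(F_k)<2^{-k}$ and $\nrm{\ind_{F_k}}_X\geq\varepsilon$, the sets $G_k:=\bigcup_{j\geq k}F_j$ form a decreasing sequence inside $H_n$ with $\mu(G_k)\leq 2^{-k+1}$, so $\ind_{G_k}\downarrow 0$ a.e., and the absolute continuity of the quasi-norm forces $\nrm{\ind_{G_k}}_X\to 0$, contradicting $\nrm{\ind_{F_k}}_X\leq\nrm{\ind_{G_k}}_X$. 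Gathering all finite rational linear combinations $\sum_{i}q_i\ind_{A_i\cap H_n}$ with $q_i\in\Q+i\Q$, $A_i\in\mc{A}$ and $n\geq 1$ then yields the desired countable dense subset of $X$, proving separability.
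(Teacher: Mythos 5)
Your proof is correct, but it is organized around a different device than the paper's. The paper's argument runs everything through a weak order unit $u>0$: the candidate dense set consists of functions $\sum_k a_k u\ind_{A_k}$, a general $f$ is approximated by $f_nu$ where $f_n$ are simple functions converging to $fu^{-1}$, and the passage from $u\ind_E$ to $u\ind_{A}$ with $A\in\mc{A}$ is done by extracting an a.e.\ convergent subsequence from $\ind_{A_n}\to\ind_E$ in measure and applying the dominated-convergence form of order-continuity with dominating function $u$. You instead avoid the multiplier $u$ entirely by intersecting with an exhaustion $(H_n)$ satisfying both $\mu(H_n)<\infty$ and $\ind_{H_n}\in X$ (obtained by intersecting the sets from Proposition~\ref{prop:weakorderunit}\ref{it:propsat2} with a $\sigma$-finite exhaustion), truncate via $\min(f,k)\ind_{H_n}$ using order-continuity on two monotone sequences, and then replace the resulting finite-measure level sets by sets from $\mc{A}$. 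The one step where you need a genuinely new ingredient is the last one: since you no longer have a fixed dominating function in $X$ supported on $A_i\triangle E_i$, you prove a quantitative absolute-continuity statement ($F\subseteq H_n$, $\mu(F)$ small $\Rightarrow\nrm{\ind_F}_X$ small) by the union-and-contradiction argument with $G_k=\bigcup_{j\geq k}F_j$; this is correct, and in fact gives a uniform modulus where the paper only needs convergence along a subsequence. Both proofs use order-continuity at the same two junctures (truncation and the $\mc{A}$-approximation), so the architecture is the same; the trade-off is that the paper's version is shorter because multiplying by $u$ puts \emph{all} indicators into play at once, whereas yours stays closer to the classical $L^p$ argument and showcases the absolute-continuity formulation from Proposition~\ref{prop:abscontordercont}.
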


\begin{proof}
Let $\mc{A}$ be a countable collection of measurable sets 
such that for every measurable set $E\subseteq \Omega$ with $\mu(E)<\infty$ and every $\varepsilon>0$ there is an $A \in \mc{A}$ with $\mu(A\Delta E)<\varepsilon$. By Proposition \ref{prop:weakorderunit}, there is a weak order unit $u \in X$, i.e. a function $u\in X$ such that  $u>0$ a.e. We claim that the countable set of functions
\begin{equation*}
   \cbraces{\sum_{k=1}^n a_k\cdot u \ind_{A_k}: a_k \in \Q\oplus i\Q, \, A_k \in \mc{A}} \subseteq X
\end{equation*}
is dense in $X$. Indeed, for any $f \in X$, we can find a sequence of simple functions $(f_n)_{\geq 1}$ such that $\abs{f_n} \leq \abs{f}u^{-1}$ for all $n\geq 1$ and $f_n \to fu^{-1}$ pointwise a.e. Therefore, by the order-continuity of $X$, we have that $f_nu \to f$ in $X$. Hence, by the density of $\Q$ in $\R$ and the quasi-triangle inequality, it suffices to show that for all measurable $E\subseteq \Omega$  there is a sequence $(A_n)_{n\geq 1}$ in $\mc{A}$ such that $\lim_{n\to \infty}\nrm{u\ind_E-u\ind_{A_n}}_X=0$ Moreover, since $(\Omega,\mu)$ is $\sigma$-finite, it suffices to consider $\mu(E)<\infty$

Fix a measurable $E \subseteq \Omega$ with $\mu(E)<\infty$ and let let $(A_{n})_{n\geq 1}$ be a sequence of measurable sets in $\mc{A}$ such that $\mu(A_{n} \Delta E) \to 0$ as $n\to \infty$, i.e. $\ind_{A_n} \to \ind_E$ (locally) in measure. Then there is a subsequence such that $u\ind_{A_{n_k}}\to u\ind_E$ pointwise a.e. By the order-continuity of $X$, we conclude that $\lim_{k\to \infty} \nrm{u\ind_{A_{n_k}}-u\ind_E}_X = 0$, finishing the proof.
\end{proof}

\begin{remark}
    The converse of Proposition \ref{prop:separable} also holds: if $X$ is a separable quasi-Banach function space over $(\Omega,\mu)$, then $X$ is order-continuous and $(\Omega,\mu)$ is separable. The order-continuity of $X$ follows from the fact that $X$ contains an isomorphic copy of $\ell^\infty$ if it is not order-continuous and for the separability of $(\Omega,\mu)$, one can adapt the proof of \cite[Theorem 1.5.5]{BS88}
    \end{remark}

By Proposition~\ref{prop:kothedualembeddingdual}, $X'$ can be identified with a closed subspace in $X^\ast$. In the next proposition we will characterize when $X'= X^*$.

\begin{proposition}\label{prop:dualordercont}
    Let $X$ be a quasi-Banach function space over $(\Omega,\mu)$. 
    \begin{enumerate}[(i)]
        \item\label{it:dualordercont1} If $X$ is order-continuous, then $X' = X^*$.
        \item\label{it:dualordercont2}  If $X$ is a Banach function space and $X' = X^*$, then $X$ is order-continuous.
    \end{enumerate}
\end{proposition}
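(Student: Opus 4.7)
For \ref{it:dualordercont1}, the strategy is to show that the isometric embedding $\iota\colon X'\hookrightarrow X^*$ from Proposition~\ref{prop:kothedualembeddingdual} is surjective. Given $x^*\in X^*$, I would fix an increasing sequence $F_n\uparrow\Omega$ with $\ind_{F_n}\in X$ (Proposition~\ref{prop:weakorderunit}\ref{it:propsat2}) and define set functions $\nu_n(E):=x^*(\ind_E)$ for measurable $E\subseteq F_n$. The ideal property makes $\nu_n$ well-defined and finitely additive; countable additivity is exactly where order-continuity enters, since $E_k\downarrow\emptyset$ implies $\ind_{E_k}\downarrow 0$ and hence $\abs{\nu_n(E_k)}\leq\nrm{x^*}_{X^*}\nrm{\ind_{E_k}}_X\to 0$. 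Because $\nu_n\ll\mu$, the Radon--Nikodym theorem produces a consistent family $g_n\in L^1(F_n,\mu)$; I would then set $g|_{F_n}:=g_n$ and aim to show $g\in X'$ with $x^*=\iota(g)$.

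A short uniform-approximation argument (using $\ind_{F_n}\in X$ to control the $X$-norm by the sup-norm on $F_n$) extends the defining identity to $x^*(s)=\int sg\dd\mu$ for every bounded measurable $s$ supported on some $F_n$. The key technical point is then a sign trick: applying this identity to $\widetilde f_n:=\min(\abs{f},n)\ind_{F_n}\,\overline{\sgn(g)}$ produces
\[
\int\min(\abs{f},n)\ind_{F_n}\abs{g}\dd\mu=x^*(\widetilde f_n)\leq\nrm{x^*}_{X^*}\nrm{f}_X,
\]
whence the monotone convergence theorem yields $g\in X'$ with $\nrm{g}_{X'}\leq\nrm{x^*}_{X^*}$. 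With $g\in X'$ in hand, the equality $x^*(f)=\int fg\dd\mu$ for a general $f\in X$ follows by approximating $f$ in $X$-norm by $f\ind_{\{\abs{f}\leq n\}\cap F_n}$ (convergent by order-continuity) and simultaneously in $L^1(\abs{g}\dd\mu)$ by the dominated convergence theorem.

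For \ref{it:dualordercont2} I would proceed by contraposition. Assume $X$ is not order-continuous; take $0\leq f_n\downarrow 0$ in $X$ with $\nrm{f_n}_X\geq\varepsilon>0$ for all $n$. By Hahn--Banach and the assumption $X^*=X'$, pick norming $g_n\in X'$ with $\nrm{g_n}_{X'}=1$ and $\int f_n g_n\dd\mu=\nrm{f_n}_X\geq\varepsilon$; replacing $g_n$ by $\abs{g_n}$, which lies in $X'$ with equal norm by the ideal property of $X'$, we may assume $g_n\geq 0$. Fixing a Banach limit $L$, set $\phi(f):=L_n\hab{\int f g_n\dd\mu}$; this is a bounded functional on $X$, and since $g_n\geq 0$ and $f_m\geq f_n$ for $n\geq m$, we get $\int f_m g_n\dd\mu\geq\int f_n g_n\dd\mu\geq\varepsilon$ for $n\geq m$, so $\phi(f_m)\geq\varepsilon$ for every $m$. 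By hypothesis $\phi=\iota(g)$ for some $g\in X'$, but then $\int f_m g\dd\mu\to 0$ by the dominated convergence theorem (using the majorant $\abs{f_m g}\leq f_1\abs{g}\in L^1(\Omega)$), contradicting $\phi(f_m)\geq\varepsilon$.

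The main obstacle I anticipate is the sign-trick step in \ref{it:dualordercont1}: one has to establish $g\in X'$ \emph{before} the general identity $x^*(f)=\int fg\dd\mu$ is available, which forces the slightly indirect route through $\widetilde f_n$. In \ref{it:dualordercont2}, appealing to a Banach limit (rather than to a weak-$*$ sequential cluster point of $(g_n)$) lets us avoid assuming any separability of $X$.
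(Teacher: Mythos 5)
Your proof is correct. For part \ref{it:dualordercont1} you follow essentially the same route as the paper: represent $x^*$ by applying the Radon--Nikodym theorem to the set function $E\mapsto x^*(\ind_E)$, with order-continuity supplying countable additivity, and then pass to general $f\in X$ by truncation. The only real difference is that you localize to an exhausting sequence $F_n\uparrow\Omega$ with $\ind_{F_n}\in X$, whereas the paper multiplies by a weak order unit $u$ and works with $E\mapsto x^*(\ind_E u)$ globally; these are interchangeable by Proposition~\ref{prop:weakorderunit}. Your sign-trick step, which establishes $g\in X'$ with $\nrm{g}_{X'}\leq\nrm{x^*}_{X^*}$ \emph{before} the general identity $x^*(f)=\int_\Omega fg\dd\mu$ is invoked, is in fact more careful than the paper's write-up, which appeals to dominated convergence without first exhibiting the integrable majorant $\abs{f}\abs{g}$. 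For part \ref{it:dualordercont2} you take a genuinely different route: the paper notes that $\int_\Omega f_ng\dd\mu\to 0$ for every $g\in X'=X^*$, so $f_n\to 0$ weakly, and then uses Mazur's theorem to produce a convex combination of the $f_n$ with small norm, which by monotonicity controls the tail $\nrm{f_j}_X$. You instead argue by contraposition, choose positive norming functionals $g_n$ for $f_n$, glue them with a Banach limit into a single $\phi\in X^*$ satisfying $\phi(f_m)\geq\varepsilon$ for all $m$, and contradict the representation $\phi=\iota(g)$ via dominated convergence. Both arguments are correct and of comparable length; the paper's uses only Hahn--Banach separation, while yours trades that for the existence of Banach limits and in exchange avoids the step of converting weak-null convergence into norm-small convex combinations and back to the tail via monotonicity.
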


\begin{proof}
    For \ref{it:dualordercont1} assume that $X$ is order-continuous and let $u \in X$ be a weak order unit, i.e. $u>0$ a.e. Take $x^* \in X^*$ and for all measurable $E \subseteq \Omega$ define $\lambda(E) = x^*(\ind_E u)$. Then $\lambda$ is a complex measure, since for disjoint, measurable $E_1,E_2,\ldots \subseteq \Omega$ we have
    \begin{align*}
        \lambda\has{\bigcup_{n=1}^\infty E_n} = \lim_{N\to \infty} \sum_{n=1}^N \lambda\ha{E_n} + x^*\has{\sum_{n = N+1}^\infty \ind_{E_n}u} = \sum_{n=1}^\infty \lambda\ha{E_n}.
    \end{align*}
    where the last step follows from $\sum_{n = N+1}^\infty \ind_{E_n}u \downarrow 0$ pointwise a.e. as $N \to \infty$ and the order continuity of $X$.
    Moreover, note that $\lambda$ is absolutely continuous with respect to $u\,\mathrm{d}\mu$, so by the Radon--Nikodym theorem there is a $g \in L^0(\Omega)$ such that $x^*(\ind_E u) = \lambda(E) = \int_Eg u\dd\mu$ for all measurable $E \subseteq \Omega$.

    Now let $f \in X$ be arbitrary and let $(f_n)_{n\geq 1}$ be a sequence of simple functions such that $\abs{f_n} \leq \abs{f}u^{-1}$ for all $n\geq 1$ and $f_n \to fu^{-1}$ pointwise a.e. By the order-continuity of $X$, we have $f_n u \to f$ in $X$ and thus, by the dominated convergence theorem,
    $$
    x^*(f) = \lim_{n \to \infty} x^*(f_n u) = \lim_{n \to \infty} \int_\Omega f_n g u\dd\mu = \int_\Omega fg \dd\mu.
    $$
    We conclude that $g \in X'$ and $\iota(g) = x^*$, which shows that $X^* =X'$.

    \medskip

    For \ref{it:dualordercont2} assume that $X$ is a Banach function space and $X' = X^*$. Let $(f_n)_{n\geq 1}$ be a sequence in $X$ such that $f_n \downarrow 0$ pointwise a.e. For any $g \in X'$ we have, by the dominated convergence theorem, that
    $$
    \lim_{n \to \infty}  \int_\Omega f_n g\dd\mu = 0.
    $$
    Since  $X' = X^*$, we deduce that $\cbrace{f_n:n\geq 1}\cup {0}$ is weakly closed. Thus, by the Hahn--Banach separation theorem, its convex hull is norm closed. Therefore, for any $\varepsilon>0$, there are $a_1,\ldots,a_n \geq 0$ with $\sum_{k=1}^n a_k=1$ such that
    $
    \nrm{\sum_{k=1}^n a_k f_k}_X<\varepsilon.
    $
    Since $(f_n)_{n\geq 1}$ is decreasing, this implies that $\nrm{f_j}_X<\varepsilon$ for all $j\geq n$, finishing the proof.
\end{proof}

We note that, if $X$ is a quasi-Banach function space, it can happen that $X^* = \cbrace{0}$. In this case the assumption $X'=X^*$ is trivial, which explains the need for the assumption that $X$ is a Banach function space in Proposition \ref{prop:dualordercont}\ref{it:dualordercont2}. For an example of a quasi-Banach function space $X$ with trivial dual and which is not order-continuous, we refer the reader to \cite[Example 2.19]{ORS08}. 

\bigskip

We end this subsection with a corollary on the connection between order-continuity and reflexivity.

\begin{corollary}\label{cor:reflexiveordercontinuous}
        Let $X$ be a Banach function space over $(\Omega,\mu)$. Then $X$ is reflexive if and only if $X$ has the Fatou property and $X$ and $X'$ are order-continuous.
\end{corollary}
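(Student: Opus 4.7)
The plan is to combine three tools from the paper: the Lorentz--Luxemburg theorem (Theorem~\ref{thm:lorentzluxemburg}), the identification $X^\ast = X'$ for order-continuous spaces (Proposition~\ref{prop:dualordercont}\ref{it:dualordercont1}), and the remark following the definition of order-continuity that any non-order-continuous quasi-Banach function space contains a lattice-isomorphic copy of $\ell^\infty$. In both directions, the argument runs through the chain of isometric identifications $X^{\ast\ast} \cong (X')^\ast \cong X''$.

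For the forward implication, I would first note that $\ell^\infty$ is non-reflexive and every closed subspace of a reflexive Banach space is reflexive, so no reflexive Banach function space can contain an isomorphic copy of $\ell^\infty$. Applied to $X$, this shows $X$ is order-continuous. Reflexivity of $X$ also forces $X^\ast$ to be reflexive, and by Proposition~\ref{prop:kothedualembeddingdual} the K\"othe dual $X'$ embeds isometrically as a closed subspace of $X^\ast$; hence $X'$ is reflexive, and therefore order-continuous by the same $\ell^\infty$-argument. Now Proposition~\ref{prop:dualordercont}\ref{it:dualordercont1} applied to $X$ and then to $X'$ (which is a Banach function space by Theorem~\ref{thm:dualofbfsisbfs}) gives isometric identifications $X^\ast = X'$ and $(X')^\ast = X''$, so $X^{\ast\ast} = X''$. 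Reflexivity of $X$ then yields $X = X''$ isometrically, and Theorem~\ref{thm:lorentzluxemburg} delivers the Fatou property.

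For the converse, assume $X$ has the Fatou property and $X$ and $X'$ are both order-continuous. Lorentz--Luxemburg gives $X = X''$ isometrically, and Proposition~\ref{prop:dualordercont}\ref{it:dualordercont1} applied to $X$ and to $X'$ yields $X^\ast = X'$ and $(X')^\ast = X''$, so $X^{\ast\ast} = X'' = X$ isometrically. The main obstacle of the whole argument is the bookkeeping step of verifying that the composite isometry $X \to X^{\ast\ast}$ built from these K\"othe identifications actually agrees with the canonical evaluation map $J(f)(x^\ast) = x^\ast(f)$, so that surjectivity of the composition translates to surjectivity of $J$. This reduces to checking that for $f \in X$ and $g \in X'$, the functional on $X'$ corresponding to $f$ under $X \hookrightarrow X''$ sends $g$ to $\int_\Omega fg\,\mathrm{d}\mu$, which is precisely the action of $J(f)$ on $\iota(g) \in X^\ast$. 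Once this identification is confirmed, reflexivity of $X$ follows, completing the equivalence.
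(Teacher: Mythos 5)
Your proof is correct, and your converse direction coincides with the paper's (the paper writes $X^{**}=X'^{*}=X''=X$ via Proposition~\ref{prop:dualordercont}\ref{it:dualordercont1} and Theorem~\ref{thm:lorentzluxemburg}; your extra care in checking that the composite isometry agrees with the canonical evaluation map is a point the paper glosses over, and your verification is sound). Your forward direction, however, takes a genuinely different route. The paper first shows $X'=X^*$ directly from reflexivity: since $X'$ is saturated (Theorem~\ref{thm:dualofbfsisbfs}), Proposition~\ref{prop:weakorderunit}\ref{it:propsat4} gives that the pre-annihilator of $\iota(X')$ in $X$ is trivial, and reflexivity plus closedness of $\iota(X')$ then force $\iota(X')=X^*$; order-continuity of $X$ and of $X'$ is then extracted from Proposition~\ref{prop:dualordercont}\ref{it:dualordercont2} applied to the identities $X^*=X'$ and $X'^{*}=X''$. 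You instead obtain order-continuity of $X$ and $X'$ up front from the fact that a non-order-continuous Banach function space contains a lattice-isomorphic copy of $\ell^\infty$, which is incompatible with reflexivity of $X$ (and of $X'$, which is reflexive as a closed isometric subspace of $X^*$ by Proposition~\ref{prop:kothedualembeddingdual}); only then do you invoke Proposition~\ref{prop:dualordercont}\ref{it:dualordercont1} to build the duality chain. Both arguments are valid. The paper's route is self-contained, using only results it proves; yours leans on the $\ell^\infty$-containment theorem of Meyer-Nieberg, which the paper only states in a remark with an external citation, but in exchange it avoids the Hahn--Banach separation argument hidden inside Proposition~\ref{prop:dualordercont}\ref{it:dualordercont2} and makes the structural reason for order-continuity (no copy of $\ell^\infty$ inside a reflexive space) more transparent.
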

\begin{proof}
If $X$ has the Fatou property and $X$ and $X'$ are order-continuous, we immediately obtain $$X^{**}=X'^{*} = X'' = X$$ by Proposition \ref{prop:dualordercont}\ref{it:dualordercont1} and Theorem \ref{thm:lorentzluxemburg}, so $X$ is reflexive. 

For the converse, assume that $X$ is reflexive. By Theorem~\ref{thm:dualofbfsisbfs} the space $X'$ is saturated, so by Proposition~\ref{prop:kothedualembeddingdual} and Proposition \ref{prop:weakorderunit}\ref{it:propsat4}, we have
\[
(X')^{\top}=\big\{f\in X:\int_{\R^d}\!fg\,\mathrm{d}x=0\text{ for all $g\in X'$}\big\}=\{0\}.
\]
As $X$ is reflexive and $X'$ is closed, this proves that $X'=X^\ast$. Thus, Proposition \ref{prop:dualordercont}\ref{it:dualordercont2} implies that $X$ is order-continuous. Moreover, we obtain
\[
X'^\ast=X^{\ast\ast}=X.
\]
By Proposition~\ref{prop:kothedualembeddingdual} we have $X''\subseteq X'^\ast$ and, as in the proof of Theorem~\ref{thm:saturateddual}, we have $X\subseteq X''$, both with embedding constant $1$ . We conclude that actually
\[
X''=X'^\ast=X.
\]
The first equality implies that $X'$ is order-continuous by Proposition \ref{prop:dualordercont}\ref{it:dualordercont2}, and the second equality implies that $X$ has the Fatou property by Theorem~\ref{thm:lorentzluxemburg}. This proves the result.
\end{proof}

\subsection{Weighted Banach function spaces}\label{subsec:weights}
In this final subsection, we want to make clear that the saturation property naturally allows one to consider weighted spaces without having to change any of the defining properties of a quasi-Banach function space to weighted versions (cf. \cite{CMM22}).
Moreover, we will provide a general strategy which can be used to transfer results in the literature for quasi-Banach function spaces (and their K\"othe duals) assumed to contain all indicator functions of sets of finite measure to results for quasi-Banach function spaces satisfying the saturation property.

To do so, we discuss two ways of introducing a weight to a quasi-Banach function space: as a multiplier and as a change of measure. For the multiplier viewpoint, we  take a weight $0<w\in L^0(\Omega)$, a quasi-Banach function space $X$ over $(\Omega,\mu)$, and define a new space $X(w)$ as the space of those $f\in L^0(\Omega)$ for which $fw\in X$, equipped with the quasi-norm
\[
\|f\|_{X(w)}:=\|fw\|_X.
\]
This is again a quasi-Banach function space over $(\Omega,\mu)$:
\begin{proposition}\label{prop:weightmultiplier}
Let $X$ be a quasi-Banach function space over $(\Omega,\mu)$ and let $0<w\in L^0(\Omega)$. Then $X(w)$ is a quasi-Banach function space over $(\Omega,\mu)$ with $K_{X(w)}=K_X$ and
\[
X(w)'=X'(w^{-1}).
\]
Moreover, if $X$ has the Fatou property or is order-continuous, then the same holds for $X(w)$.
\end{proposition}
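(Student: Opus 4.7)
The plan is to exploit the fact that multiplication by $w$ is essentially a quasi-norm-preserving bijection between $X(w)$ and $X$, and to transfer the defining properties through it. Since $w>0$ a.e., the map $M_w\colon X(w) \to X$, $f \mapsto fw$, is a well-defined bijection with inverse $g \mapsto gw^{-1}$, satisfying $\nrm{M_w f}_X = \nrm{f}_{X(w)}$ by definition. Moreover, $M_w$ preserves all pointwise a.e. relations: $\abs{g} \le \abs{f}$ a.e.\ if and only if $\abs{gw} \le \abs{fw}$ a.e., and the same holds for monotone, decreasing, and dominated convergence statements.

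With this in hand, most of the assertions are routine. Since $(f+g)w = fw + gw$, the quasi-triangle inequality of $X$ transfers to $X(w)$ with the same constant, and applying the same argument to $M_w^{-1}$ gives $K_{X(w)} = K_X$. Completeness of $X(w)$ follows because Cauchy sequences in $X(w)$ correspond under $M_w$ to Cauchy sequences in $X$. The ideal property of $X(w)$ is obtained by applying the ideal property of $X$ to $\abs{gw} \le \abs{fw}$. For the K\"othe dual, the substitution $f = f' w^{-1}$ gives
\[
\nrm{g}_{X(w)'} = \sup_{\nrm{f}_{X(w)} = 1} \nrm{fg}_{L^1(\Omega)} = \sup_{\nrm{f'}_{X} = 1} \nrm{f' g w^{-1}}_{L^1(\Omega)} = \nrm{gw^{-1}}_{X'} = \nrm{g}_{X'(w^{-1})},
\]
which simultaneously identifies $X(w)' = X'(w^{-1})$ and verifies the norm.

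The main obstacle is the saturation property, since $M_w$ does not map indicator functions to indicator functions. Given a measurable $E \subseteq \Omega$ of positive measure, I would truncate via $E_n := \cbrace{x \in E : 1/n \le w(x) \le n}$ for $n\geq 1$. Since $w>0$ a.e. and $w<\infty$ a.e., $E = \bigcup_{n\geq 1} E_n$ up to a null set, so some $E_{n_0}$ has positive measure. By the saturation property of $X$, there is a measurable $F \subseteq E_{n_0}$ of positive measure with $\ind_F \in X$; since $\ind_F w \le n_0 \ind_F$, the ideal property of $X$ then gives $\ind_F w \in X$, i.e.\ $\ind_F \in X(w)$, establishing saturation of $X(w)$.

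For the final claim, the Fatou property and order-continuity transfer through $M_w$ essentially verbatim. For Fatou, if $0 \le f_n \uparrow f$ in $X(w)$ with $\sup_{n\geq 1} \nrm{f_n}_{X(w)} < \infty$, then $0 \le f_n w \uparrow fw$ in $X$ with the same supremum of norms, so the Fatou property of $X$ gives $fw \in X$ with $\nrm{fw}_X = \sup_{n\geq 1}\nrm{f_n w}_X$, whence $f \in X(w)$ with $\nrm{f}_{X(w)} = \sup_{n\geq 1}\nrm{f_n}_{X(w)}$. Likewise, if $f_n \downarrow 0$ pointwise a.e.\ in $X(w)$, then $f_n w \downarrow 0$ pointwise a.e., so $\nrm{f_n}_{X(w)} = \nrm{f_n w}_X \downarrow 0$ by order-continuity of $X$.
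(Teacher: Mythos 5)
Your proof is correct and follows essentially the same route as the paper: both rest on the observation that multiplication by $w$ is an order-preserving isometric isomorphism between $X(w)$ and $X$, through which the quasi-norm constant, completeness, ideal, Fatou, and order-continuity properties transfer, and both compute the K\"othe dual by the same substitution $f\mapsto fw^{-1}$. The only difference is cosmetic: for saturation the paper simply notes that the image $uw^{-1}$ of a weak order unit $0<u\in X$ is again a weak order unit of $X(w)$ (via Proposition~\ref{prop:weakorderunit}), whereas you verify the indicator-set definition directly by truncating to the sets where $n^{-1}\le w\le n$; both arguments are valid.
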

\begin{proof}

We observe that the map $f\mapsto fw^{-1}$ is an order preserving isometric isomorphism between $X$ and $X(w)$. Hence, the ideal, Riesz--Fischer and Fatou properties, as well as order-continuity,  respectively, are possessed by $X(w)$ if and only if they are by $X$. Similarly, for the saturation property, note that if $0<u\in X$ is a weak order unit, then its image under this map $uw^{-1}\in X(w)$ is also a weak order unit. This concludes the proof of the first result.

For the equality $X(w)'=X'(w^{-1})$, we note that
\begin{align*}
\|g\|_{X(w)'}&=\sup_{\|f\|_{X(w)}=1}\int_\Omega\!|f|w|g|w^{-1}\,\mathrm{d}\mu
=\sup_{\|h\|_{X}=1}\int_\Omega\!|h||g|w^{-1}\,\mathrm{d}\mu\\
&=\|gw^{-1}\|_{X'}=\|g\|_{X'(w^{-1})}.
\end{align*}
This proves the result.
\end{proof}

Instead of adding a weight as a multiplier, we can also take a weight $0<w\in L^0(\Omega)$ and consider it as a change of measure through
\[
w(E):=\int_E\!w\,\mathrm{d}\mu,
\]
which we will also denote as $w\,\mathrm{d}\mu$. The measure space $(\Omega,w\,\mathrm{d}\mu)$ is again a $\sigma$-finite measure space: since $L^1(\Omega)(w)$ is saturated by Proposition~\ref{prop:weightmultiplier}, it follows from Proposition~\ref{prop:weakorderunit} that there exists a sequence of measurable sets $F_n\subseteq\Omega$ increasing to $\Omega$ with $\ind_{F_n}\in L^1(\Omega)(w)$, i.e., $w(F_n)<\infty$, for all $n\geq 1$.

In a similar vein, a quasi-Banach function space $X$ over $(\Omega,\mu)$ is also a quasi-Banach function space over $(\Omega,w\,\mathrm{d}\mu)$. Note that K\"othe duality depends on the underlying measure space, so when there is such a change of measure, we will write $X^\dag$ to denote the K\"othe dual with respect to this new measure.
\begin{proposition}\label{prop:weightmeasure}
Let $X$ be a quasi-Banach function space over $(\Omega,\mu)$ and let $0<w\in L^0(\Omega)$. Then $X$ is also a quasi-Banach function space over $(\Omega,w\,\mathrm{d}\mu)$. Moreover, we have
\[
X^\dag=X'(w).
\]
If $X$ has the Fatou property or is order-continuous with respect to $(\Omega,\mu)$, then the same holds with respect to $(\Omega,w\,\mathrm{d}\mu)$.
\end{proposition}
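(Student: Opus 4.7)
The key observation that drives everything is that since $w>0$ $\mu$-a.e., the measures $\mu$ and $w\,\mathrm{d}\mu$ have exactly the same null sets, so they induce the same space $L^0(\Omega)$ of equivalence classes of measurable functions, and the phrases ``a.e.'' and ``of positive measure'' are unambiguous. Also, by the discussion preceding the proposition, $w\,\mathrm{d}\mu$ is $\sigma$-finite, so the setup of Section~\ref{sec:qBFS} applies.

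The plan for the first assertion is to check the three defining properties in turn. The quasi-norm $\|\cdot\|_X$ does not depend on the measure, so completeness (equivalently, the Riesz--Fischer property) transfers trivially, and $K_X$ is unchanged. The ideal property likewise depends only on $\|\cdot\|_X$ and the pointwise comparison $|g|\leq|f|$, which is unaffected by the change of measure. For saturation, one can either observe directly that ``measurable $E$ of positive $\mu$-measure'' and ``measurable $E$ of positive $w\,\mathrm{d}\mu$-measure'' describe the same collection of sets, or invoke Proposition~\ref{prop:weakorderunit}: a weak order unit $u\in X$ with $u>0$ $\mu$-a.e. is also $>0$ $w\,\mathrm{d}\mu$-a.e. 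For the Fatou property and order-continuity, the defining conditions ($0\leq f_n\uparrow f$ a.e., $f_n\downarrow 0$ a.e.) and the norm $\|\cdot\|_X$ are both invariant under replacing $\mu$ by $w\,\mathrm{d}\mu$, so these transfer immediately.

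For the identity $X^\dag=X'(w)$, the plan is to unwind the two definitions. By definition, $g\in X^\dag$ iff $fg\in L^1(\Omega,w\,\mathrm{d}\mu)$ for every $f\in X$, and $\|fg\|_{L^1(\Omega,w\,\mathrm{d}\mu)}=\|fgw\|_{L^1(\Omega,\mu)}$. Hence $g\in X^\dag$ iff $f(gw)\in L^1(\Omega,\mu)$ for all $f\in X$, which is exactly the statement that $gw\in X'$, i.e. $g\in X'(w)$. Taking the supremum over $\|f\|_X=1$ in the identity $\|fg\|_{L^1(\Omega,w\,\mathrm{d}\mu)}=\|f(gw)\|_{L^1(\Omega,\mu)}$ gives
\[
\|g\|_{X^\dag}=\|gw\|_{X'}=\|g\|_{X'(w)},
\]
completing the proof.

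There is essentially no hard step: the heart of the matter is the trivial but crucial fact that mutually absolutely continuous measures share their null sets and their measurable functions, and hence share the structural properties of any quasi-Banach function space carved out of $L^0(\Omega)$ by a quasi-norm that makes no reference to the measure. The only place where the measure actually enters is in the K\"othe dual, where the change of variable between $\dd(w\,\mathrm{d}\mu)$ and $\dd\mu$ produces precisely the multiplier $w$.
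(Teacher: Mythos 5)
Your proof is correct and follows essentially the same route as the paper's: both rest on the observation that $\mu$ and $w\,\mathrm{d}\mu$ share their null sets, so all the defining properties transfer unchanged, and both obtain $X^\dag=X'(w)$ from the identity $\|fg\|_{L^1(\Omega,w\,\mathrm{d}\mu)}=\|f(gw)\|_{L^1(\Omega,\mu)}$. Your version is merely a little more explicit in spelling out the set-level equivalence for $X^\dag$ before taking suprema.
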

\begin{proof}
Since the ideal property, Riesz--Fischer property, Fatou property, order-continuity, and the existence of a weak order unit all only depend on the null sets of the underlying measure space, the fact that $(\Omega,\mu)$ and $(\Omega,w\,\mathrm{d}\mu)$ have the same null sets proves that $X$ has any of these respective properties with respect to $(\Omega,w\,\mathrm{d}\mu)$ if and only if it has them with respect to $(\Omega,\mu)$. As for the duality result, we have
\[
\|g\|_{X^\dag}=\sup_{\|f\|_X=1}\int_\Omega\!|fg|w\,\mathrm{d}\mu
=\|gw\|_{X'}=\|g\|_{X'(w)},
\]
as desired.
\end{proof}

\begin{example}
    When it comes to the Lebesgue spaces, the idea of changing measure takes the following form: for $p\in[1,\infty)$, the space $L^p(\Omega,w\,\mathrm{d}\mu)$ can be seen as either a space over $(\Omega,w\,\mathrm{d}\mu)$, in which case the K\"othe dual is given by $L^{p'}(\Omega,w\,\mathrm{d}\mu)$, or as a space over $(\Omega,\mu)$, in which case the K\"othe dual is given by $L^{p'}(\Omega,w^{1-p'}\,\mathrm{d}\mu)$. Both of these approaches appear in the literature, and it seems to be a matter of taste and context which one is preferred.

For the multiplier approach, for $p\in(0,\infty)$ we have
\[
\|fw\|_{L^p(\Omega)}=\Big(\int_\Omega\!|f|^pw^p\,\mathrm{d}\mu\Big)^{\frac{1}{p}},
\]
which shows that $L^p(\Omega)(w)=L^p(\Omega,w^p\,\mathrm{d}\mu)$. When $p\in[1,\infty]$, Proposition~\ref{prop:weightmultiplier} yields
\[
(L^p(\Omega)(w))'=L^{p'}(\Omega)(w^{-1}),
\]
which is equal to $L^{p'}(\Omega,w^{-p'}\,\mathrm{d}\mu)$ when $p>1$. 
\end{example}

\begin{remark}
    In Lebesgue spaces with $p<\infty$, both the multiplier and the change of measure approach yield the same theory up to a change of weight $w\mapsto w^p$. When $p=\infty$, the multiplier approach is preferable. As a matter of fact, while the change of measure approach is classically used more frequently, we would argue that the multiplier approach does not only result in a theory that includes the case $p=\infty$ in a satisfying manner, but also leads to a more symmetric theory altogether. Therefore, in our view, it is more intuitive and easier to work with.
\end{remark}

The situation quickly becomes more complicated when we venture beyond the classical (weighted) Lebesgue spaces. For example, when dealing with weak type spaces $L^{p,\infty}(\Omega)$ for $p\in(0,\infty)$, both approaches yield different kinds of weighted spaces. Indeed, while both the spaces 
\[
L^{p,\infty}(\Omega)(w),\quad L^{p,\infty}(\Omega,w^p\,\mathrm{d}\mu)
\]
contain the space $L^p(\Omega)(w)=L^p(\Omega,w^p\,\mathrm{d}\mu)$ by Chebyshev's inequality, they are generally not equal.
For example, set $\Omega=\R^d$ equipped with the Lebesgue measure and define $w(x):=|x|^{-\frac{d}{p}}$. Then $\ind_{\R^d}\in L^{p,\infty}(\R^d)(w)$, but $\ind_{\R^d}\notin L^{p,\infty}(\Omega,w^p\,\mathrm{d}x)$.

As we saw in Theorem~\ref{prop:weightmeasure}, a change of measure for $X$  will lead to a weight as a multiplier in the associate space. Hence, to fully understand the properties of a space with respect to weights, usually both of these approaches need to be understood.

\bigskip

Since many results in the literature are proven for quasi-Banach function spaces $X$ over $(\Omega,\mu)$ with the property that for all measurable $E\subseteq\Omega$ with $\mu(E)<\infty$ we have $\ind_E\in X$ and $\int_E\!|f|\,\mathrm{d}\mu<\infty$ for all $f\in X$ (i.e., $\ind_E\in X'$), one might wonder if there is a general method of replacing these arguments with arguments that only require the saturation property. This is indeed the case, as we will outline next.

By Proposition~\ref{prop:weakorderunit}, the saturation property means that there is a function $u\in X$ that satisfies $u>0$ a.e. If we now take \emph{any} measurable set $E\subseteq\Omega$, then the ideal property of $X$ implies that also $\ind_E u\in X$. Thus, the weighted space $X(u)$ contains the indicator function of all measurable subsets of $\Omega$; not just the ones with finite measure. Moreover the map $f \mapsto u^{-1} f$ is a positive isometry from $X$ to $X(u)$.

If the space $X(u)$ is considered a Banach function space over $(\Omega,w\,\mathrm{d}\mu)$ for a particular weight $w$, then for all measurable set $E\subseteq\Omega$ we also have that $\ind_E\in X(u)^\dag$, i.e.,
\[
\int_E\!|f|w\,\mathrm{d}\mu<\infty
\]
for all $f\in X(u)$. The following result is a quasi-Banach function space version of this (cf. \cite[Lemma~7.4]{CNS03}).
\begin{proposition}\label{prop:weightkothe}
Let $X$ be a quasi-Banach function space over $(\Omega,\mu)$ and let $0<u\in X$ be a weak order unit. Then $\ind_E\in X(u)$ for \emph{all} measurable $E\subseteq\Omega$.

If $X'$ is a Banach function space, then there is a weight $0<w\in L^1(\Omega)$ for which the space $X(u)$, considered as a quasi-Banach function space over $(\Omega,w\,\mathrm{d}\mu)$, additionally satisfies $\ind_E\in X(u)^\dag$ for \emph{all} measurable $E\subseteq\Omega$, i.e.,
\[
\int_E\!|f|w\,\mathrm{d}\mu<\infty
\]
for all $f\in X(u)$.
\end{proposition}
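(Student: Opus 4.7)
For the first assertion, the plan is an immediate appeal to the ideal property: since $0 \leq \ind_E u \leq u \in X$ pointwise for every measurable $E \subseteq \Omega$, one obtains $\ind_E u \in X$, which by the definition of the weighted space means $\|\ind_E\|_{X(u)} = \|\ind_E u\|_X < \infty$, and hence $\ind_E \in X(u)$.

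For the second assertion, the strategy is to choose the weight $w$ so that the K\"othe dual of $X(u)$ over $(\Omega, w\,\mathrm{d}\mu)$ becomes a weighted version of $X'$ that contains every indicator function. Since $X'$ is assumed to be a Banach function space, Proposition \ref{prop:weakorderunit} supplies a weak order unit $v \in X'$, i.e., a measurable function with $v > 0$ a.e. The natural candidate is then $w := uv$. By the very definition of the K\"othe dual, $u \in X$ and $v \in X'$ force $uv \in L^1(\Omega)$, so $w \in L^1(\Omega)$, and $w > 0$ a.e.\ is clear since both factors are strictly positive.

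To conclude, the plan is to identify $X(u)^\dag$ explicitly and reduce the indicator-function property to a single application of the ideal property of $X'$. Combining the formula $X(u)' = X'(u^{-1})$ from Proposition \ref{prop:weightmultiplier} with the formula $X^\dag = X'(w)$ from Proposition \ref{prop:weightmeasure} applied to $X(u)$ viewed over $(\Omega, w\,\mathrm{d}\mu)$, one obtains
\[
X(u)^\dag = X(u)'(w) = X'(u^{-1})(w) = X'(u^{-1}w) = X'(v).
\]
Then, for any measurable $E \subseteq \Omega$, the ideal property of $X'$ gives
\[
\|\ind_E\|_{X(u)^\dag} = \|\ind_E v\|_{X'} \leq \|v\|_{X'} < \infty,
\]
so that $\ind_E \in X(u)^\dag$, and the integral inequality in the statement follows by unwinding the definition of the K\"othe norm over $(\Omega, w\,\mathrm{d}\mu)$.

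The proof is short once the correct weight is guessed, so there is no real technical obstacle; the key conceptual step is the realization that the product $uv$ of a weak order unit of $X$ with a weak order unit of $X'$ simultaneously lies in $L^1(\Omega)$, making $w\,\mathrm{d}\mu$ a $\sigma$-finite measure, and satisfies $u^{-1}w = v \in X'$, which is exactly the condition needed to control every indicator function in the weighted dual.
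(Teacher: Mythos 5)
Your proposal is correct and follows essentially the same route as the paper: the first part via $\ind_E u \leq u$ and the ideal property, and the second part by taking a weak order unit $v \in X'$, setting $w = uv \in L^1(\Omega)$, and computing $X(u)^\dag = X(u)'(w) = X'(u^{-1}w) = X'(v)$ so that the ideal property of $X'$ finishes the argument. The only difference is that you spell out the final step explicitly where the paper simply refers back to the first assertion with $X$ replaced by $X'$ and $u$ by $v$.
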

\begin{proof}
For the first assertion, note that $\ind_E\in X(u)$ if and only if $\ind_Eu\in X$. Since $\ind_Eu\leq u$, this follows from the ideal property.

For the second assertion, let $0<v\in X'$ be a weak order unit and set $w:=uv\in L^1(\Omega)$. Then it follows from Proposition~\ref{prop:weightmultiplier} and Proposition~\ref{prop:weightmeasure}  that 
\[
X(u)^\dag=X(u)'(w)=X'(u^{-1}w)=X'(v),
\]
so using the same argument as before with $X$ replaced by $X'$ and $u$ replaced by $v$ proves the result.
\end{proof}
\begin{remark}
In connection to Proposition~\ref{prop:weightkothe} we want to note that if $X$ is a quasi-Banach function space for which $X'$ is saturated, then picking weak order units $0<u\in X$, $0<v\in X'$ and setting $w:=uv\in L^1(\Omega)$, we have the inclusions
\begin{equation}\label{eq:weightinclusions1}
L^\infty(\Omega)(w^{-1})\subseteq X(v^{-1}) ,\quad X(u)\subseteq L^1(\Omega)(w).
\end{equation}
If $X$ is a Banach function space and $(\Omega,\mu)$ is a finite measure space, then we can actually find a $0<u\in X$ for which
\begin{equation}\label{eq:weightinclusions2}
L^\infty(\Omega)\subseteq X(u)\subseteq L^1(\Omega).
\end{equation}
Indeed, the Lozanovskii factorization theorem states that $L^1(\Omega)=X\cdot X'$ (see \cite{Lo69, Gi81}). This implies that we can find $0<u\in X$, $0<v\in X'$ such that $1=uv$. Then \eqref{eq:weightinclusions1} implies \eqref{eq:weightinclusions2}.
\end{remark}

Proposition~\ref{prop:weightkothe} can essentially be used as a ``patch'' 
to extend results appearing in the literature that assume that the space contains the indicator functions of sets of finite measure, to the more general class of spaces with the saturation property. We do wish to point out that, in our opinion, this solution is inelegant. One might as well do the proof correctly in the first place by arguing through a weak order unit directly. As a matter of fact, it is our hope that this survey will function as a ``patch'' for the future literature.

\subsection*{Acknowledgements}
We are grateful to Ben de Pagter for reading our manuscript to make sure we did, in fact, not do Banach function spaces wrong.
We would like to thank Dalimil Pe\v{s}a for a discussion that let to Remark \ref{rem:defBS}, Jordy van Velthoven for bringing the PhD thesis \cite{Vo15} to our attention and Dachun Yang for a discussion that led to a clarification of Remark \ref{rem:bBFS}. Moreover, we thank the anonymous referees for several insightful comments pertaining to missed references or historical inaccuracies.
Finally, we would like to thank Mark Veraar for encouraging us to write this manuscript, as well suggesting the title.

\bibliography{bieb}
\bibliographystyle{alpha}
\end{document}